\definecolor{myblue}{rgb}{0.0, 0.0, 1.0}
\definecolor{mygreen}{rgb}{0.01,0.75,0.20}
\newtheorem{theorem}{Theorem}[section]
\newtheorem{cor}[theorem]{Corollary}
\newtheorem{thm}[theorem]{Theorem}
\newtheorem{lem}[theorem]{Lemma}
\newtheorem{lemma}[theorem]{Lemma}
\newtheorem{proposition}[theorem]{Proposition}
\theoremstyle{definition}
\newtheorem{definition}[theorem]{Definition}
\newtheorem{remark}[theorem]{Remark}
\newtheorem{defin}[theorem]{Definition}
\newtheorem{example}[theorem]{Example}
\theoremstyle{definition}
\numberwithin{equation}{section}
                         \def\vge{\varepsilon}
     \def\Gd{\Delta}
\newcommand{\eat}[1]{}
\DeclarePairedDelimiter\norm{\lVert}{\rVert}%
\let\oldnorm\norm
\def\norm{\@ifstar{\oldnorm}{\oldnorm*}}
\newcommand{\Om} {\Omega}
\newcommand\restr[2]{{
  \left.\kern-\nulldelimiterspace 
  #1 
  \right|_{#2} 
  }}
\def\w{{\widetilde w}}
\def\w2{{W^{1,2}_0(\Om)}}
\def\hh2{{H^1_0(\Om)}}
\def\C{{\mathcal C}}
\def\E{{\mathcal E}}
\def\N{{\mathbb N}}
\def\F{{\mathcal F}}
\def\R{{\mathbb R}}
\def\ws2{{\F_{\frac{N}{2}}}}
\def\c1{{\C_c^1}}
\def\H{{\mathcal{H}}}
\newcommand\sol[1]{{{#1}}}
\newcommand\Q{Q}
\renewcommand{\L}{\Delta_{p}}
\newcommand{\ph}{\varphi}
\renewcommand{\H}{\mathcal{H}}
\renewcommand{\E}{E_{{\sol{u}}}}
\newcommand{\Hmm}[1]{\leavevmode{\marginpar{\tiny%
			$\hbox to 0mm{\hspace*{-0.5mm}$\leftarrow$\hss}%
			\vcenter{\vrule depth 0.1mm height 0.1mm width \the\marginparwidth}%
			\hbox to
			0mm{\hss$\rightarrow$\hspace*{-0.5mm}}$\\\relax\raggedright #1}}}
\begin{document}
	\title[Landis Conjecture]{On the Landis Conjecture for Positive Quasi-linear Operators on Graphs}
	
	\author {Ujjal Das}
	
	\address {Ujjal Das, BCAM-Basque Center for Applied Mathematics, Bilbao, Spain}
	
	\email {getujjaldas@gmail.com, udas@bcamath.org}
\author[M.~Keller]{Matthias Keller}
\address{Matthias~Keller,  Institut f\"ur Mathematik, Universit\"at Potsdam,
	14476  Potsdam, Germany}
\email{matthias.keller@uni-potsdam.de}	

	\author{Yehuda Pinchover}
	\address{Yehuda Pinchover,
		Department of Mathematics, Technion - Israel Institute of
		Technology,   Haifa, Israel}
	\email{pincho@technion.ac.il}
	
	\begin{abstract}
We prove a Landis type unique continuation result for positive quasi-linear operators on graphs. Specifically, we give decay criteria that ensures when a harmonic function for a positive {quasilinear Schr\"odinger}  operator with potential less than 1 is trivially zero. The assumption of positivity of the operator allows the application of criticality theory such as the Liouville comparison theorem. Furthermore, our results fundamentally build on the so called simplified energy. As an application we discuss the case of model graphs and in particular regular trees.

\medskip
{\noindent  {\em 2010 Mathematics  Subject  Classification.}
Primary 35J10; Secondary 35B53, 35R02, 35J92, 39A12.}\\[-3mm]

{\noindent {\em Keywords:}  Landis Conjecture, Liouville comparison principle, Agmon ground state, simplified energy.}

\medskip

\begin{center}
	{{\em Dedicated to the memory of Ha\"im Brezis}}
\end{center}

%
%
%
%
	\end{abstract}
\maketitle

\section{Introduction}
The Landis conjecture, originating in the study of elliptic partial differential equations, concerns the decay properties of solutions to second-order elliptic equations and their relation to uniqueness. Originally posed for the continuum Schr\"odinger equation, it states that if a bounded solution of $\Delta u + V u = 0$ in Euclidean space  with $|V|\leq 1$ decays faster than $\exp(-c|x|)$ for some {$c > 1$,} then $u = 0$ \cite{Landis2}. While true in dimension one \cite{Rossi}, Meshkov disproved it for complex-valued potentials in higher dimensions, constructing a nontrivial {complex-valued potential $V$ and a} solution $u$ decaying like $\exp(-c|x|^{4/3})$ \cite{Meshkov}. For real-valued potentials, the conjecture remains open, though partial results exist under {either stronger decay assumptions on the solution $u$} {or positivity assumptions of the Schr\"odinger operator, (see for example, \cite{DP24} and references therein).} The real-valued case started gaining attention since the celebrated work of Bourgain--Kenig \cite{BK}, where they derived a quantitative decay estimate of normalized solution of the above Schr\"odinger equation. This decay estimate in turn proved a weaker version of Landis’ claim. Since then, there has been growing attention on improving such a quantitative decay estimate and obtaining a sharper result than Bourgain--Kenig's, see \cite{Davey1,Davey, KSW, LMNN} and references therein. Deriving such estimates is challenging; it involves tools such as, Carleman-type inequalities, Hadamard
three-ball inequalities, and quasi-conformal techniques. Most of the improved Bourgain-Kenig estimates are
available only in dimension two. The sharpest result is due to the breakthrough by Logunov-Malinikova-Nadirashvili-Nazarov \cite{LMNN}.  Landis-type results have been studied for various operators, such as Schr\"odinger operators with drift \cite{Davey,KSW},  Dirac
 operators \cite{Cassano},  fractional Schr\"odinger operators \cite{KOW,RW19},  the time-dependent
 Schr\"odinger operators \cite{EKPV10,EKPV16}. 

  Beyond the Euclidean space $\R^d$, the conjecture has been addressed on manifolds \cite{PPV} and also on the Euclidean lattice \cite{FV,LM}. We also refer to the recent article \cite{FBRS24}, where the Landis-type results are obtained on meshes $(h\mathbb{Z})^d$,
 and the authors analyzed the behavior as the mesh-size $h$ decreases to zero. We refer to a recent review on Landis' conjecture for a comprehensive list of
 references \cite{FBSR24}.

 Recently, we have studied this conjecture  on general infinite discrete graphs under the additional assumption that the underlying Schr\"odinger
 operator is {\it positive} in the sense of the quadratic form \cite{DKP}. Simultaneously, quasilinear versions have been explored in the continuum for operators like the $p$-Laplacian, leveraging criticality theory and Liouville comparison principles, see \cite{DP24}. Although the positivity of the operator is a crucial restriction in these results, under this additional assumption, we could prove
 a sharp decay criterion for the validity of the Landis conjecture. We emphasize that there are Landis-type unique continuation results where the authors implicitly assumed the positivity of the underlying operator \cite{ABG,KSW,Sirakov}.


This paper bridges these developments by establishing the Landis conjecture for quasilinear operators on infinite graphs. We consider quasilinear Schr\"odinger operators  $\mathcal{H} = \Delta_p + V$ for the  $p$-Laplacian  $ \Delta_p $. Our main result shows that when $\mathcal{H} \geq 0$ with $V \leq 1$, any $\mathcal{H}$-harmonic function {satisfying certain (sharp) weak decay conditions must vanish.} The proof relies fundamentally on a Liouville comparison principle for {positive} quasilinear operators, which transfers criticality and ground state properties between operators under weighted norm constraints.

We demonstrate the applicability of our framework through two key examples: subcritical model graphs where explicit Green functions asymptotics are available, and $d$-regular trees where we obtain sharp decay rates $O(d^{-|x|/p})$. These illustrate how our unified approach extends the discrete linear Landis conjecture to the quasilinear setting while maintaining sharp conditions.

The paper is structured as follows. Section \ref{sec-prelim} reviews preliminaries on graph $p$-Laplacians, energy functionals, and criticality theory, culminating in the Liouville comparison principle. Section 3 presents and proves our main Landis-type theorem. Section 4 applies these results to model graphs and regular trees, with examples highlighting the sharpness of decay conditions.

\section{Preliminaries and Liouville Comparison Principle}\label{sec-prelim}

We begin this section with a few remarks about notation. With a slight abuse of notation, we identify constants with the corresponding constant function at times which is mainly applied to the constants $ 0 $ and $ 1 $. For example, we write $ f\ge 0 $ for  a function which is pointwise larger than or equal to $ 0 $. In the case when $ f $ is additionally not trivial, then we say $ f $ is \emph{positive}. Moreover, if $ f>0 $, then we say $ f $ is \emph{strictly positive}.  We denote the characteristic function of a set $ A $ by $ 1_{A} $ and also write $ 1=1_{X} $. For real valued functions $ f $ and $g$, we write $ f\wedge g=\min\{f,g\} $ and $ f\vee g=\max\{f,g\} $,  and we denote the positive and negative part of $f$ by $ f_{\pm}=({\pm}f)\vee0 $. Furthermore, we denote $f\lesssim g$ if there is a constant $C\ge 0$ such that $f\leq C g$ and $f\asymp g$ if $f\lesssim g$  and $g\lesssim f$.
Finally, in this text, $ C $ denotes a constant which depends only on $ p $ and may change from line to line.

\subsection{Graphs, $p$-Laplacian, energy functionals} Let $X$ be  an infinite countable set    equip\-ped with the discrete topology. We denote $C(X)={\{f:X \rightarrow \R \}}$ and
$C_c(X)=\{f\in C(X) \mid {\rm{supp}}(f) \Subset X\}$, where  $ K\Subset X$ means that $ K $ is a compact (i.e., finite) subset of $ X $.
Given a positive or an absolutely summable function $ f $ on a discrete set $ A $, we write 
\begin{align*}
	\sum_{A}f:=\sum_{a\in A} f(a)
\end{align*}
as it is common terminology for integrals. Below the set $ A $ will typically be a subset of either $ X $ or $ X\times X $.  A  function $m:X\to [0,\infty)  $ induces a measure on $ X $ by letting $$  m(A)=\sum_{A}m,\qquad A\subseteq X  .$$ 
We assume that $ m $ is a \emph{measure of full support}, i.e., the {density} function $ m $ is strictly positive. 
For $ q\in[1,\infty) $, we denote the Banach spaces
\begin{align*}
	\ell^{q}(X,m)=\big\{ f:X\to \R\mid  \|f\|_{q}:=\big(\sum_{X}m|f|^q\big)^{1/q} <\infty \big\},
\end{align*}
and  $ \ell^{\infty}(X)= \{f:X\to \R\mid  \|f\|_{\infty}:=\sup_{X}|f|<\infty\}$. 


Let $b$ be a connected graph over $(X,m)$, i.e., $ b:X\times X\to [0,\infty) $ is symmetric, has zero diagonal,  satisfies
\begin{align*}
	\sum_{y\in X}b(x,y)<\infty\qquad x\in X,
\end{align*} 
and for every $ x,y\!\in \! X $ there are $ x\!= \!x_{0} \! \sim\!\ldots \!\sim\! x_{n}\!=\!y $, where we write $ u\!\sim\! v $ whenever $ b(u,v)\!>\!0 $ in which case we call $ u $ and $ v $ adjacent. A graph $ b $ is called \emph{locally finite} if $ \#\{y\in X\mid y\sim x\} <\infty$ for all $ x \in X$. 
While we do not need this assumption for the main results of this paper, it is worthwhile to look at the special case of locally finite graphs from time to time.

For  $ p\in (1,\infty) $ and a potential $ V: X\to \R $,  we define the energy functional $ \Q=\Q_{p,b,m,V} $ on $ C_{c}(X) $ {by}
$$\Q(\varphi)=\frac{1}{2}\displaystyle \sum_{x,y \in X}  b(x,y)|\nabla_{xy}\varphi|^p + \sum_{x \in X} m(x)V(x)|\varphi(x)|^p,  \qquad \varphi \in  C_c(X), $$
where,  for $ x,y\in X $ and $ f\in C(X) $, the difference operator is
$$ \nabla_{xy}f:=f(x)-f(y). $$  
The energy functional $ \Q $ gives rise to the 
{\em weighted $p$-Laplacian} $ \L =\Delta_{p,b,m}  $  which acts on the following space of functions
\begin{align*}
	\mathcal{F}(X):=\mathcal{F}_{b}(X):= \big\{ f\in C(X)\mid \sum_{y \in X} b(x,y) |\nabla_{xy} {f}|^{p-1}<\infty\mbox{ for all }x\in X  \big\}
\end{align*} 
as
$$ \L[ f](x):=\frac{1}{m(x)}\sum_{y \in X} b(x,y) (\nabla_{xy} f)^{\langle p-1\rangle}, $$
where  for $a\in \mathbb{R}$ and $r>0$
\begin{equation*}
	a^{\langle r\rangle}=a|a|^{r-1}
\end{equation*}
whenever $a\neq 0$ and $0^{\langle r\rangle}=0$ otherwise.
 
Clearly, in the case of locally finite graphs one has $ \mathcal{F}(X)=C(X) $.

For a potential $ W:X\to \mathbb{R} $, we denote for $ u\in C(X) $ and $ x\in X $
\begin{align*}
	W[u](x):=W(x)(u(x))^{\langle p-1\rangle}.
\end{align*}
The {\em quasilinear {Schr\"odinger} operator}  associated to $Q$ will be denoted by $\mathcal{H}=\Delta_p+V$ which acts on $\mathcal{F}_{b}(X)$ as
\begin{equation*}
	\mathcal{H}[f](x) = \Delta_{p}f(x) + V(x)|f(x)|^{p-2}f(x),\qquad x\in X.
\end{equation*}
\begin{remark}
	
The continuum counterpart of $\L$  is referred to as the 
pseudo $p$-Laplacian in $\mathbb{R}^d$,  which acts as $\varphi \mapsto - \sum_{n=1}^n\partial_i(|\partial_i\varphi|^{p-2}\partial_i\varphi) $, and  is sometimes denoted in the literature by $-\widetilde{\L}$.
\end{remark}

Next, we recall the simplified energy functional which is given for $  \ph \in C_c(X) $ by
\begin{align*}
	\E(\ph)= \sum_{x,y\in X}b(x,y) ({\sol{u}}\otimes {\sol{u}})_{xy} |\nabla_{xy} \varphi|^2 \left[ {|\nabla_{xy} {\sol{u}}|} \langle{|\varphi|}\rangle_{xy}+  ({\sol{u}}\otimes {\sol{u}})_{xy}^{1/2}|\nabla_{xy} \varphi|\right]^{p-2}, 
\end{align*}
where $({\sol{f}}\otimes {\sol{f}})_{xy}:=u(x)u(y)$ and $\langle{|f|}\rangle_{xy}:=(f(x)+f(y))/2$ for any function $f$ and  {where we set $0\cdot \infty =0$ if $1<p<2$.}

In contrast to the ground state transform 
 in the linear case, see e.g. \cite[Proposition~3.18]{KLW}, \cite[Section 4.2]{KPP},  the simplified energy is not a representation of  the energy functional $ \Q $. However, we still have a two-sided estimate that goes back to {Pinchover--Tertikas--Tintarev \cite[Lemma 2.2]{PTT}} in the continuum setting and to Fischer \cite[Theorem 3.1]{Florian_nonlocal} in the discrete setting.   
\begin{proposition}[Simplified energy, {\cite[Theorem 3.1]{Florian_nonlocal}}] \label{Prop:simp_energy} There are $C_1,C_2>0$ such that  for all strictly positive functions  $ \sol{u} \in \mathcal{F}(X)$  and for all  $ \varphi  \in  C_c(X) $
	$$ C_1\E(\varphi)\leq \Q({\sol{ u}}\varphi)-\sum_{X}mu\mathcal{H}[u]|\varphi|^{p}   \leq C_2 \E(\varphi) .$$
\end{proposition}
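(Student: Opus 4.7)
The plan is to reduce the claim to an edge-wise pointwise inequality. First, after inserting the definition of $\mathcal{H}[u]$ into $\sum_{X}mu\mathcal{H}[u]|\varphi|^{p}$, the contribution involving $V$ cancels exactly the term $\sum_{x}m(x)V(x)|u(x)\varphi(x)|^{p}$ coming from $\Q(u\varphi)$ (using $u>0$ so that $u^{\langle p-1\rangle}u=u^{p}$). Using the symmetry $b(x,y)=b(y,x)$ together with the antisymmetry of $(x,y)\mapsto(\nabla_{xy}u)^{\langle p-1\rangle}$, one symmetrizes the remaining cross term $\sum_{x,y}b(x,y)u(x)(\nabla_{xy}u)^{\langle p-1\rangle}|\varphi(x)|^{p}$ to obtain
$$\Q(u\varphi)-\sum_{X}mu\mathcal{H}[u]|\varphi|^{p}=\tfrac{1}{2}\sum_{x,y\in X}b(x,y)\,R_{xy}(u,\varphi),$$
where
$$R_{xy}(u,\varphi):=|\nabla_{xy}(u\varphi)|^{p}-\nabla_{xy}(u|\varphi|^{p})\cdot(\nabla_{xy}u)^{\langle p-1\rangle}.$$
One checks that $R_{xy}$ is symmetric in $(x,y)$; absolute convergence of all rearrangements is ensured because $\varphi\in C_{c}(X)$ and $u\in\mathcal{F}(X)$.

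The problem is thereby reduced to the pointwise two-sided comparison
$$R_{xy}(u,\varphi)\asymp u(x)u(y)\,|\nabla_{xy}\varphi|^{2}\Bigl[|\nabla_{xy}u|\,\langle|\varphi|\rangle_{xy}+(u(x)u(y))^{1/2}|\nabla_{xy}\varphi|\Bigr]^{p-2},$$
with implicit constants depending only on $p$, valid for all $u(x),u(y)>0$ and $\varphi(x),\varphi(y)\in\mathbb{R}$. This edge-wise estimate is the heart of the matter. Since both sides are jointly homogeneous of degree $p$ in $(u,\varphi)$, I would normalize $u(x)=1$ with $s:=u(y)/u(x)\in(0,\infty)$ and reduce everything to an estimate for a function of the three scalars $(s,\varphi(x),\varphi(y))$. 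I would then split into the two regimes determined by which of $|\nabla_{xy}u|\langle|\varphi|\rangle_{xy}$ and $(u(x)u(y))^{1/2}|\nabla_{xy}\varphi|$ dominates inside the bracket. In each regime the estimate follows from a Taylor expansion of $t\mapsto|t|^{p}$ centered near $\tfrac{1}{2}(u(x)\varphi(x)+u(y)\varphi(y))$, appealing to the classical elementary inequality
$$|A+B|^{p}-|A|^{p}-pA^{\langle p-1\rangle}B\asymp B^{2}(|A|+|B|)^{p-2},\qquad A,B\in\mathbb{R},$$
applied with suitable choices of $A,B$.

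The main obstacle is the case $1<p<2$: the bracket then carries a negative exponent and is singular on the degeneracy set $\{\nabla_{xy}u=\nabla_{xy}\varphi=0\}$. Here one must invoke the convention $0\cdot\infty=0$ and carefully verify that $R_{xy}$ vanishes to the same order as the right-hand side along this set, by exploiting strict convexity of $t\mapsto|t|^{p}$ to show that both sides have the same zero locus and matching leading-order behaviour. The case $p\ge 2$ is easier: the bracket is continuous, nonnegativity of $R_{xy}$ follows from convexity, and the comparison reduces to standard pointwise convexity estimates. Once the edge-wise inequality is established with constants depending only on $p$, summing over ordered pairs $(x,y)\in X\times X$ with weight $\tfrac{1}{2}b(x,y)$ produces exactly $\E(\varphi)$ on the right-hand side, up to an overall symmetry factor absorbed into the final constants $C_{1},C_{2}$.
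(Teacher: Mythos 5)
First, note that the paper does not prove this proposition at all: it is imported verbatim from Fischer \cite[Theorem 3.1]{Florian_nonlocal}, so there is no in-paper argument to compare against. Your opening reduction is nevertheless correct and is how the cited result is organized: the $V$-terms cancel because $u>0$, the cross term symmetrizes using the antisymmetry of $(x,y)\mapsto(\nabla_{xy}u)^{\langle p-1\rangle}$, and the rearrangements are justified by $\varphi\in C_c(X)$ together with $u\in\mathcal{F}(X)$. This yields $\Q(u\varphi)-\sum_X mu\mathcal{H}[u]|\varphi|^p=\tfrac12\sum_{x,y}b(x,y)R_{xy}$ with your $R_{xy}$, and you correctly identify the two-sided edge-wise comparison of $R_{xy}$ with the summand of $\E$ as the entire content of the proposition.

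That comparison, however, is exactly what you do not prove, and the sketch you give for it does not go through as stated. Writing $a=u(x)$, $b=u(y)$, $s=\varphi(x)$, $t=\varphi(y)$, the natural splitting $as-bt=A+B$ with $A=(a-b)\tfrac{s+t}{2}$ and $B=\tfrac{a+b}{2}(s-t)$ feeds your classical inequality $|A+B|^p-|A|^p-pA^{\langle p-1\rangle}B\asymp B^2(|A|+|B|)^{p-2}$, but this produces the arithmetic mean $\tfrac{a+b}{2}$ and the signed average $|\tfrac{s+t}{2}|$, whereas $\E$ carries the geometric mean $(ab)^{1/2}$ and the average of moduli $\tfrac{|s|+|t|}{2}$; the ratios $\tfrac{a+b}{2}/(ab)^{1/2}$ and $\tfrac{|s|+|t|}{2}/|\tfrac{s+t}{2}|$ are both unbounded, so the replacement is not harmless and needs justification, two-sidedly. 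Worse, the term subtracted in $R_{xy}$, namely $(a|s|^p-b|t|^p)(a-b)^{\langle p-1\rangle}$, is \emph{not} the Taylor polynomial $|A|^p+pA^{\langle p-1\rangle}B$ for any of the candidate choices of $A,B$; the discrepancy is a further edge-wise quantity that must itself be shown to be absorbed, from above \emph{and} below, into $B^2(|A|+|B|)^{p-2}$, and this forces a case analysis over the signs of $st$ and of $a-b$ and over which term dominates the bracket --- precisely the part that makes the proof of \cite[Theorem 3.1]{Florian_nonlocal} long. As written, ``applied with suitable choices of $A,B$'' conceals the entire difficulty: what you have is a correct reduction plus an unproven key lemma, not a proof.
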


%

\subsection{Criticality theory and Liouville comparison principle}
In this subsection, we recall some basic notions of quasilinear criticality theory on discrete graphs. We refer to \cite{Florian_cri} for more details on this topic and \cite{KPP} for criticality theory on graphs in the linear setting. 

A function $u\in \mathcal{F}_{b}(X)$ satisfying $\mathcal{H}u\ge 0$ ($\mathcal{H}u\le 0$) is called $\mathcal{H}$-superharmonic ($\mathcal{H}$-subharmonic).  A function $u$ is said to be $\mathcal{H}$-harmonic if $u$ is $\mathcal{H}$-superharmonic and $\mathcal{H}$-subharmonic. 

We say that $\mathcal{H}$ is a positive quasilinear Schr\"odinger operator and write 
\begin{equation*}
	\mathcal{H}\ge 0
\end{equation*}
if $\mathcal{H}$ admits a positive $\mathcal{H}$-superharmonic function. We recall  that $\Q \geq 0$ on $C_c(X)$ if and only if $\mathcal{H}$ admits a positive supersolution on $X$ which is known as the Agmon-Allegretto-Piepenbrink-type theorem \cite[Theorem~2.3]{Florian_cri}. In the case of locally finite graphs, $\Q \geq 0$ on $C_c(X)$ even implies the existence of a positive solution on $X$. By the Harnack inequality \cite[Lemma 4.4]{Florian_nonlocal} it can be seen that due to the connectedness, every positive supersolution is indeed strictly positive.   

\begin{defin}[Critical/subcritical]Let $\Q=\Q_{p,b,m,V}\geq 0$ be the energy functional associated with the quasilinear Schr\"o\-dinger operator ${\H}$. The functional $\Q$ is said to be \emph{subcritical} in $X$ if there is a positive
$W\in C(X)$ such that $Q_{p,b,m,V-W}\ge 0$ on $C_{c}(X)$. Otherwise, the form
	$\Q$ is called \emph{critical}  in $X$.
\end{defin}
{It follows from \cite[Theorem~5.1]{Florian_nonlocal} that  $\Q$ is critical if and only if  there exists a so called \emph{null-sequence} $(\ph_{n})$ of positive functions }in $C_{c}(X)$ that satisfies $\ph_{n}(o)\asymp 1$ for some $o\in X$ and all $n$, and $\Q(\ph_{n})\to 0$ as $n\to\infty$.
It further turns out that $\Q$ is critical in $X$ if and only if the equation $\H[\varphi]=0$ admits a unique positive $\H$-superharmonic function (up to a multiplicative positive constant) 
\cite[Theorem~5.1]{Florian_nonlocal}. In fact, such an $\H$-superharmonic function is a strictly positive $\H$-harmonic and is called an {\em Agmon ground state}.

\begin{definition}[Positive solution of minimal growth at infinity]
		A positive function $ u $ is called a {\em positive solution of $\H[u]=0$ of minimal growth at infinity}  in $X$ if $ u>0 $ and $\H[u]=0$   in~$X\setminus K_{0}$ for some compact $ K_{0}\Subset X $ and for any $ v $ such that $ v>0 $ and  $\H[v]\geq 0$ in~$X\setminus K$  for some $ K_{0}\Subset K\Subset X $ which satisfies~$u\leq v$ on 
			$K$ one has $u\leq v$ in~$X\setminus K$. Such a function $ u $ that satisfies $ u>0 $ and $ \H[u] =0$ on $ X  $ is called a {\em global minimal positive solution}.
	\end{definition}

Now we recall the quasilinear version of the Liouville comparison principle, which will play the key role in proving our main result. In the continuum, this principle goes back to \cite{P_Liou} in the linear case and \cite{PTT} in the quasilinear case, and the linear discrete analogue can be found in \cite{DKP}. The quasilinear version of the Liouville comparison can be found in \cite[Proposition 4.9]{Florian_nonlocal}. For the sake of completeness, we provide a proof.

\begin{thm}[{Liouville comparison principle, 
cf.~\cite[Proposition 4.9]{Florian_nonlocal}}]\label{thm:comparison} 
Suppose $\mathcal{H}$ and $\tilde{\mathcal{H}}$ are positive quasilinear Schr\"odinger operators associated to connected graphs $b$, $\tilde b$ and potentials $V, \tilde V $ over $ (X,m) $. Let $u\in \mathcal{F}_b(X)$ and $v\in \mathcal{F}_{\tilde b}(X)$ be such that
	\begin{itemize}
		\item [(a)] $\tilde{\mathcal{H}}$ is critical and $v>0$ is its Agmon ground state,
		\item [(b)] $u_{+}\neq 0$  and $\mathcal{H}u_{+}\leq0$,
		\item [(c)] for some $ C>0 $, we have {on $\{u>0\}\times \{u>0\}$}
		\begin{align*}
			b^{2/p}(u_{+}\otimes u_{+})&\leq C {\tilde b}^{2/p}(v\otimes v),\\
			b^{1-2/p}{|\nabla u_{+}|^{p-2}}&\leq C {\tilde  b}^{1-2/p}{|\nabla v|^{p-2}}.
		\end{align*}
	\end{itemize}
	Then, $\mathcal{H}$ is critical and $u>0$ is an Agmon ground state of $\mathcal{H}$.
\end{thm}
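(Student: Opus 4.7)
The plan is to construct a null sequence for $\Q$ out of one for $\tilde\Q$ via the simplified energy, and then invoke the null-sequence characterization of criticality from \cite[Theorem 5.1]{Florian_nonlocal} at both ends. By hypothesis (a), there exists $(\tilde\varphi_n)\subset C_c(X)$ with $\tilde\varphi_n(o)\asymp 1$ for some $o\in X$ and $\tilde\Q(\tilde\varphi_n)\to 0$. Since $v$ is strictly positive, write $\psi_n:=\tilde\varphi_n/v\in C_c(X)$, normalized so $\psi_n(o)=1$. Because $\tilde{\H}v=0$, Proposition~\ref{Prop:simp_energy} applied with base function $v$ yields
\[
C_1\, E_v(\psi_n)\leq \tilde\Q(v\psi_n)=\tilde\Q(\tilde\varphi_n)\to 0,
\]
where $E_v$ denotes the simplified energy with base function $v$.

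The heart of the argument is an edge-by-edge comparison showing $E_{u_+}(\psi_n)\lesssim E_v(\psi_n)$, forcing $E_{u_+}(\psi_n)\to 0$. On each edge $(x,y)$ the common factor $|\nabla_{xy}\psi_n|^2$ cancels, and the estimate reduces to the pointwise inequality
\[
b(x,y)(u_+\otimes u_+)_{xy}\, T_{u_+}^{p-2}\lesssim \tilde b(x,y)(v\otimes v)_{xy}\, T_v^{p-2},
\]
with $T_w:=|\nabla_{xy}w|\langle|\psi_n|\rangle_{xy}+(w\otimes w)_{xy}^{1/2}|\nabla_{xy}\psi_n|$. Raising the first inequality in (c) to the power $p/2$ gives $b(u_+\otimes u_+)^{p/2}\lesssim \tilde b(v\otimes v)^{p/2}$, and multiplying the two inequalities in (c) yields $b(u_+\otimes u_+)|\nabla u_+|^{p-2}\lesssim \tilde b(v\otimes v)|\nabla v|^{p-2}$. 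For $p\geq 2$ the bracket may be expanded via $(A+B)^{p-2}\asymp A^{p-2}+B^{p-2}$ and the pointwise estimate follows immediately from these two bounds; the subquadratic range $1<p<2$ is more delicate since $t\mapsto t^{p-2}$ is decreasing and will require a more direct argument.

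With $E_{u_+}(\psi_n)\to 0$ in hand, Proposition~\ref{Prop:simp_energy} applied with base function $u_+$ (interpreted via the $0\cdot\infty=0$ convention on edges where $u_+$ vanishes) gives
\[
\Q(u_+\psi_n)\leq C_2\, E_{u_+}(\psi_n)+\sum_X m\,u_+\,\H[u_+]\,|\psi_n|^p\leq C_2\, E_{u_+}(\psi_n)\to 0,
\]
where the second sum is nonpositive because $u_+\geq 0$ and $\H[u_+]\leq 0$ by hypothesis (b). Choosing $o$ with $u_+(o)>0$ (possible since $u_+\not\equiv 0$), the sequence $\varphi_n:=u_+\psi_n\in C_c(X)$ satisfies $\varphi_n(o)=u_+(o)\asymp 1$ and $\Q(\varphi_n)\to 0$, hence is a null sequence for $\Q$. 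By \cite[Theorem 5.1]{Florian_nonlocal}, $\Q$ is critical on $X$.

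To identify $u$ as an Agmon ground state, I pass to the limit: the normalized null sequence $\tilde\varphi_n$ for $\tilde\Q$ converges pointwise to $v$, so $\psi_n\to 1$ and $\varphi_n=u_+\psi_n\to u_+$; on the other hand, any null sequence for the critical $\Q$ converges (up to a positive multiple) to the unique Agmon ground state $w$ of $\H$. Therefore $u_+=\lambda w$ for some $\lambda>0$, and since $w$ is strictly positive, $u_+>0$ everywhere, so $u=u_+$ is itself an Agmon ground state of $\H$. The main technical obstacle will be the edge-by-edge simplified-energy comparison in the subquadratic case $1<p<2$; a secondary point is the legitimacy of applying Proposition~\ref{Prop:simp_energy} to the possibly non-strictly-positive $u_+$, which can be resolved either by a regularization $u_++\varepsilon$ followed by passage to the limit, or by noting that edges carrying a zero value of $u_+$ contribute zero to both sides of the simplified-energy estimate.
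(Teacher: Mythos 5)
Your strategy is essentially the paper's: transplant the null sequence $(\tilde\varphi_n)$ of $\tilde{\mathcal{Q}}$ into the test functions $u_+\tilde\varphi_n/v$, bound $\mathcal{Q}(u_+\tilde\varphi_n/v)$ from above by the simplified energy with base $u_+$ (the potential term being nonpositive by (b)), compare edge by edge with the simplified energy with base $v$, bound the latter by $\tilde{\mathcal{Q}}(\tilde\varphi_n)$ via criticality of $\tilde{\mathcal{H}}$, and identify $u_+$ as the pointwise limit of the resulting null sequence. The limiting identification and your remark on edges where $u_+$ vanishes are fine.

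The genuine gap is exactly where you flag it: the edge-by-edge comparison for $1<p<2$. There the exponent $p-2$ is negative, so enlarging the bracket decreases $T^{p-2}$, the equivalence $(A+B)^{p-2}\asymp A^{p-2}+B^{p-2}$ fails, and your two derived inequalities (the $p/2$-power of the first hypothesis and the product of the two) are no longer the right currency. The paper closes this case, and $p\ge 2$ simultaneously, with a single elementary observation: for $0\le\alpha\le\tilde\alpha$, $0\le\beta\le\tilde\beta$ and $\gamma>-1$ one has
\begin{equation*}
\alpha^{2}\bigl(\beta^{1/\gamma}+\alpha\bigr)^{\gamma}\;\le\;\tilde\alpha^{2}\bigl(\tilde\beta^{1/\gamma}+\tilde\alpha\bigr)^{\gamma},
\end{equation*}
which follows by checking that the partial derivatives in $\alpha$ and in $\beta$ are nonnegative; the prefactor $\alpha^{2}$ compensates the decreasing bracket, and only $\gamma>-2$ is needed, so $\gamma=p-2$ is admissible for every $p>1$. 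Apply this on each edge with $\alpha=\bigl(b^{2/p}(u_+\otimes u_+)\bigr)^{1/2}\,\bigl|\nabla(\tilde\varphi_n/v)\bigr|$ and $\beta=b^{(p-2)/p}\,|\nabla u_+|^{p-2}\,\langle|\tilde\varphi_n/v|\rangle^{p-2}$, together with the corresponding quantities built from $\tilde b$ and $v$: since $1-2/p=(p-2)/p$, the two hypotheses in (c) say precisely $\alpha\lesssim\tilde\alpha$ and $\beta\lesssim\tilde\beta$, and the desired bound $b(u_+\otimes u_+)_{xy}T_{u_+}^{p-2}\lesssim\tilde b(v\otimes v)_{xy}T_v^{p-2}$ follows in one stroke for all $p>1$. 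With this lemma inserted in place of your case split, your proof is complete and coincides with the paper's.
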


\begin{proof} In \cite[Theorem 2.1]{DKP}, we have shown the statement for $p=2$. So, we may assume  $p\neq 2$.
By $(a)$ there exists a null-sequence $(\ph_{k}) $ in $ C_{c}(X)$ with respect to a vertex $x\in X$ for the energy functional $ \tilde{\mathcal{Q}} $ of $\tilde{\mathcal{H}}$. For $ k\in\mathbb{N} $, set
	\begin{align*}
		\psi_{k}=\frac{u_{+}\ph_{k}}{v} \, .
	\end{align*}
     Using the simplified energy functional, Proposition~\ref{Prop:simp_energy},  we estimate the energy functional $\mathcal{Q}$ associated to $\mathcal{H}$ as follows 
	\begin{multline*}
		\mathcal{Q}(\psi_{k})= \mathcal{Q}\left(\frac{u_{+}\ph_{k}}{v}\right) \\
         {\lesssim}
        \frac{1}{2}\sum_{X\times X}\!b (u_{+}\otimes u_{+})\!\left|\nabla (\frac{\ph_{k}}{v})\right|^{2} \!\left[|\nabla u_+| \langle |\frac{\varphi_k}{v}|\rangle + (u_{+}\otimes u_{+})^{\frac{1}{2}}\left|\nabla (\frac{\varphi_k}{v})\right|\right]^{p-2} \!\!+\!\!\sum_{X}\!m(u_{+}{\mathcal{H}}u_{+})\!\left|\frac{\ph_{k}}{v}\right|^{p}  \\
        =\frac{1}{2}\sum_{X\times X}b^{\frac{2}{p}} (u_{+}\otimes u_{+})\left|\nabla (\frac{\ph_{k}}{v})\right|^{2} \left[ \left(b^{\frac{p-2}{p}}|\nabla u_+|^{p-2} \right)^{{\frac{1}{p-2}}} \langle |\frac{\varphi_k}{v}|\rangle + \left(b^{\frac{2}{p}}(u_{+}\otimes u_{+}) \right)^{\frac{1}{2}}\left|\nabla (\frac{\varphi_k}{v})\right|\right]^{p-2} \\ 
 +\sum_{X}m(u_{+}{\mathcal{H}}u_{+})\left|\frac{\ph_{k}}{v}\right|^{p}  .
 	\end{multline*}
 Now, observe that for $0\leq \alpha\le \tilde \alpha$, $0\leq \beta\le \tilde \beta$ and $\gamma>-1$, we have
 \begin{equation*}
 	{\alpha^2}\left(\beta^{1/\gamma}+\alpha\right)^{\gamma}\leq 	{\tilde\alpha^2}\left(\tilde\beta^{1/\gamma}+\tilde\alpha\right)^{\gamma}
 \end{equation*}
 which can be checked by considering the left hand side as either a function of $\alpha $ or $\beta$ and observing that these functions have a positive derivative. This observation  now allows us to use assumptions $(b)$, and $(c)$, to estimate
	\begin{align*} 
 	\mathcal{Q}(\psi_{k})
    &\lesssim \frac{1}{2}\!\sum_{X\times X}\! \tilde{b}^{\frac{2}{p}} (v\otimes v) \!\left|\nabla (\frac{\ph_{k}}{v})\right|^{2}\! \left[\left( \tilde{b}^{\frac{p-2}{p}}|\nabla v|^{p-2} \right)^{\frac{1}{p-2}}\!\langle |\frac{\varphi_k}{v}|\rangle \!+\! \left(\tilde{b}^{2/p}(v\otimes v) \right)^{\frac{1}{2}} \left|\nabla (\frac{\varphi_k}{v})\right|\right]^{p-2}\\
  &= \frac{1}{2}\sum_{X\times X} \tilde{b}^{\frac{2}{p}+\frac{p-2}{p}} (v\otimes v) \left|\nabla (\frac{\ph_{k}}{v})\right|^{2} \left[|\nabla v| \langle |\frac{\varphi_k}{v}|\rangle + \left(v\otimes v \right)^{\frac{1}{2}} \left|\nabla (\frac{\varphi_k}{v})\right|\right]^{p-2}\\
		&\lesssim	\tilde{\mathcal{Q}}(\ph_{k})\to 0,\qquad k\to\infty.
	\end{align*}
	Thus, $( \psi_{k} )$ is a null-sequence for $ \mathcal{H} $, and $ \mathcal{H} $ is critical. In particular, 
	$( \psi_{k} )$ converges pointwise to an Agmon ground state for $ \mathcal{H} $. Since $ \tilde{\mathcal{H}} $ is critical and $ v>0 $ is $ \mathcal{H} $-harmonic, the null-sequence $ (\ph_{k}) $ converges pointwise to a positive multiple of $ v>0$. Hence, $ (\psi_k) $ converges pointwise to a positive multiple of $ u_{+} $ which is therefore strictly positive.  Thus, $u_+=u$ is an Agmon ground state for $ \mathcal{H} $. 
\eat{
	By (a) there exists a null-sequence $(\ph_{k}) $ in $ C_{c}(X)$ with respect to a vertex $x\in X$ for the energy function $ \mathcal{Q}' $ of $\mathcal{H}'$. For $ k\in\mathbb{N} $, set
	\begin{align*}
		\psi_{k}=\frac{u_{+}\ph_{k}}{v} \, .
	\end{align*}
	Observe that the ground state transform of the energy functional $\mathcal{Q}$ of $\mathcal{H}$ with respect to $u_+$, (b) and (c) yields 
	\begin{align*}
		\mathcal{Q}(\psi_{k})&= \frac{1}{2}\sum_{X\times X}b (u_{+}\otimes u_{+})|\nabla (\ph_{k}/v)|^{2}+\sum_{X}m(u_{+}\mathcal{H}u_{+})({\ph_{k}}/{v})^{2}\\&\le \frac{C}{2}\sum_{X\times X}b' (v\otimes v)|\nabla (\ph_{k}/v)|^{2}\\
		&= C	\mathcal{Q}'(\ph_{k})\to 0,\qquad k\to\infty.
	\end{align*}
	Thus, $( \psi_{k} )$ is a null-sequence for $ \mathcal{H} $, and $ \mathcal{H} $ is critical. In particular, 
	$( \psi_{k} )$ converges pointwise to an Agmon ground state for $ \mathcal{H} $. Since $ \mathcal{H}' $ is critical and $ v>0 $ is $ \mathcal{H} $-harmonic, the null-sequence $ (\ph_{k}) $ converges pointwise to a positive multiple of $ v>0$. Hence, $ (\psi_k) $ converges pointwise to a positive multiple of $ u_{+} $ which is therefore strictly positive.  Thus, $u_+=u$ is an Agmon ground state for $ \mathcal{H} $. }
\end{proof}
\begin{remark}
	{Let $p\leq 2$. Suppose that 
	$\mathcal{H}$ is a positive quasilinear Schr\"odinger operator associated to  a connected graph $b$ over $ (X,m) $ and let $\Delta_p$ be the corresponding $p$-Laplacian. Assume that  $\Delta_p$ is critical, so, the constant function  $v=1$ is its Agmon ground state. Taking into account $p\le 2$, the comparison in item (c) in the theorem above then reduces to $u$ being bounded. Thus, any bounded  $u\in \mathcal{F}_b(X)$ with $u_{+}\neq 0$ and $\mathcal{H}u_{+}\leq 0$ is an Agmon ground state of $\mathcal{H}$ and $u>0$. In particular,  $\mathcal{H}$ is critical.}
\end{remark}

\section{General Landis Type Theorem}

{A \emph{minimal positive Green function}} $G_{\alpha}$ for $\alpha \geq 0$ and $o\in X$ is the smallest positive function $\phi$ such that  $(\Delta_{p} +\alpha)\phi=1_{o}$, i.e., we have
\begin{equation*}
	(\Delta_{p} +\alpha)G_{\alpha}=1_{o} 
\end{equation*}
and $G_{\alpha}$ is a positive solution of minimal growth at infinity for $ (\Delta_p+\alpha) $. It follows from {\cite[Theorem~2.5]{Florian_cri}} that such a $G_{\alpha}$ exists for $\alpha >0$. For $\alpha =0$, the Green function $G_{0}$ exists if and only if the $\Delta_p$ is subcritical on $X$, {\cite[Theorem~2.5]{Florian_cri}.} 

The next theorem is the abstract main result of the paper, which extends our earlier result \cite[Theorem~3.1]{DKP} concerning the linear case ($p=2$) to $p \in (1,\infty)$.

\begin{thm}\label{t:Landis_general}
	{Let $ \mathcal{H}=\Gd_p+V$ be a positive quasilinear Schr\"odinger operator associated to a connected graph $ b $ over $ (X,m) $ with potential $ V\le 1 $. Let $\tilde {\mathcal{H}}$ be a critical quasilinear Schr\"odinger operator with Agmon ground state $ v>0 $ associated to a connected graph $ \tilde b$ over $ (X,m)$.  
		Any $ \mathcal{H} $-harmonic function $ u $ with non-trivial positive part $u_{+}$ 	satisfying for some  $C>0$	
		 {on $\{u>0\}\times \{u>0\}$}
		\begin{align*}
			b^{2/p}(u_{+}\otimes u_{+})&\leq C {\tilde b}^{2/p}(v\otimes v),\\
			b^{1-2/p}{|\nabla u_{+}|^{p-2}}&\leq C {\tilde  b}^{1-2/p}{|\nabla v|^{p-2}},
		\end{align*}
		and
			\begin{equation*}
				 \liminf_{x\to\infty} \frac{u_{+}(x)}{G_{1}(x)}=0 \,,
			\end{equation*}
	is trivially zero.}
\end{thm}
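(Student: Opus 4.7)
The plan is to argue by contradiction: suppose $u_+\not\equiv 0$ and derive a contradiction with the decay hypothesis. The two main tools are the Liouville comparison principle (Theorem~\ref{thm:comparison}), which will force $u_+$ to be strictly positive and an Agmon ground state of $\mathcal{H}$, and the minimal growth of the Green function $G_1$, which will provide a positive lower bound for $u_+/G_1$ outside a compact set.

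First I would verify that $\mathcal{H}u_+\leq 0$ pointwise. At $x\in X$ with $u(x)>0$, the inequality $u_+(y)\geq u(y)$ for all $y$ and the monotone decrease of $a\mapsto (u(x)-a)^{\langle p-1\rangle}$ give $\Delta_p u_+(x)\leq \Delta_p u(x)$; since $V(x)(u_+(x))^{p-1}=V(x)u(x)^{p-1}$, this yields $\mathcal{H}u_+(x)\leq \mathcal{H}u(x)=0$. At $x\in X$ with $u(x)\leq 0$, one has $u_+(x)=0$, so
\[
\Delta_p u_+(x)=-\frac{1}{m(x)}\sum_{y\in X} b(x,y)\, u_+(y)^{p-1}\leq 0
\]
and the potential term vanishes, again giving $\mathcal{H}u_+(x)\leq 0$. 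With $u_+\not\equiv 0$ and the comparison inequalities in the statement matching verbatim assumption~(c) of Theorem~\ref{thm:comparison}, the Liouville comparison principle delivers that $\mathcal{H}$ is critical and $u_+$ is an Agmon ground state of $\mathcal{H}$; in particular, $u_+>0$ strictly, so $u=u_+>0$ on $X$.

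It then remains to compare $u$ with $G_1$. Since $V\leq 1$ and $u>0$,
\[
(\Delta_p+1)u=\mathcal{H}u+(1-V)u^{p-1}=(1-V)u^{p-1}\geq 0,
\]
so $u$ is a positive $(\Delta_p+1)$-supersolution on all of $X$. Let $K_0\Subset X$ be a compact set outside of which $G_1$ is of minimal growth for $\Delta_p+1$, enlarge it to any compact $K\supset K_0$, and, using the strict positivity of $u$ on $K$, pick $c>0$ so small that $cG_1\leq u$ on $K$. By the $(p-1)$-homogeneity of $\Delta_p+1$, $cG_1$ is itself a positive solution of minimal growth outside $K$, and applying the minimal growth property to the supersolution $u$ yields $cG_1\leq u$ on $X\setminus K$. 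Hence $\liminf_{x\to\infty}u_+(x)/G_1(x)\geq c>0$, contradicting the decay hypothesis. The main obstacle is the initial verification that $\mathcal{H}u_+\leq 0$ together with the careful matching of the comparison inequalities (which involve only $u_+$) against assumption~(c) of Theorem~\ref{thm:comparison}; once this is in place, the closing comparison with $G_1$ proceeds as in the linear case of \cite{DKP}.
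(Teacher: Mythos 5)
Your proposal is correct and follows essentially the same route as the paper: establish $\mathcal{H}u_+\leq 0$ (the paper isolates this as Lemma~\ref{lem:u_+}, whose case analysis you reproduce), invoke the Liouville comparison principle to conclude that $\mathcal{H}$ is critical and $u=u_+>0$ is an Agmon ground state, and then use that $u$ is a positive supersolution of $\Delta_p+1$ together with the minimal growth of $G_1$ to contradict the $\liminf$ hypothesis. Your spelled-out justification of the step $cG_1\leq u$ via the homogeneity of $\Delta_p+1$ and the definition of minimal growth is a slightly more detailed version of what the paper states in one line.
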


The following lemma generalizes \cite[Lemma 3.2]{DKP} to the quasilinear setting, assuming additionally that one of the subharmonic functions is the trivial function $v=0$.
\begin{lemma}\label{lem:u_+}
Let $ \mathcal{H}=\Gd_p+V$ be a positive quasilinear Schr\"odinger operator associated to a  graph $ b $ over $ (X,m) $ with potential $V$.
If $u$ is an $ \mathcal{H} $-subharmonic function, then $u_+:=\max\{u, 0\}$ is also $ \mathcal{H} $-subharmonic.
\end{lemma}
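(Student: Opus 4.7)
The plan is to argue vertex by vertex, splitting the analysis at $x\in X$ into the two cases $u(x)\le 0$ and $u(x)>0$. Before starting, I would observe that since $\lvert u_{+}(x)-u_{+}(y)\rvert\le \lvert u(x)-u(y)\rvert$, the membership $u\in \mathcal{F}_{b}(X)$ immediately transfers to $u_{+}$, so $\mathcal{H}[u_{+}](x)$ is defined pointwise. The positivity assumption on $\mathcal{H}$ plays no role in the proof; only the structural form $\mathcal{H}=\Delta_{p}+V$ and the fact that $t\mapsto t^{\langle p-1\rangle}$ is non-decreasing and sign-preserving are used.

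In the case $u(x)\le 0$, we have $u_{+}(x)=0$, so the potential term $V(x)\lvert u_{+}(x)\rvert^{p-2}u_{+}(x)$ vanishes. The $p$-Laplacian reduces to
\begin{equation*}
\Delta_{p}u_{+}(x)=\frac{1}{m(x)}\sum_{y\in X} b(x,y)\bigl(-u_{+}(y)\bigr)^{\langle p-1\rangle},
\end{equation*}
and since $u_{+}(y)\ge 0$ for every $y$, the sign-preserving property of $t\mapsto t^{\langle p-1\rangle}$ makes each summand non-positive, hence $\mathcal{H}[u_{+}](x)\le 0$.

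In the case $u(x)>0$, we have $u_{+}(x)=u(x)$, so the potential terms for $u_{+}$ and $u$ coincide at $x$. For every $y$ one has $u_{+}(y)\ge u(y)$, which gives $u_{+}(x)-u_{+}(y)=u(x)-u_{+}(y)\le u(x)-u(y)$; applying the monotonicity of $t\mapsto t^{\langle p-1\rangle}$ termwise yields $\Delta_{p}u_{+}(x)\le \Delta_{p}u(x)$. Adding the (identical) potential term gives $\mathcal{H}[u_{+}](x)\le \mathcal{H}[u](x)\le 0$.

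There is no real obstacle; the only subtlety to watch is that when $p<2$ the non-differentiable structure is handled entirely by the monotonicity of $t\mapsto t^{\langle p-1\rangle}$, and that the absolute summability needed to split the sum termwise is granted by $u_{+}\in\mathcal{F}_{b}(X)$. Combining the two cases gives $\mathcal{H}[u_{+}]\le 0$ pointwise on $X$, which is the desired subharmonicity of $u_{+}$.
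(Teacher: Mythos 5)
Your proof is correct and follows essentially the same route as the paper: a pointwise case split on the sign of $u(x)$, with the nonpositive case handled by sign-preservation of $t\mapsto t^{\langle p-1\rangle}$ and the positive case by comparing $u_{+}(y)\ge u(y)$ termwise. The only (cosmetic) difference is that you invoke monotonicity of $t\mapsto t^{\langle p-1\rangle}$ directly where the paper spells out the two sub-cases $u_{+}(y)=0$ and $u_{+}(y)>0$, and your preliminary remark that $u_{+}\in\mathcal{F}_{b}(X)$ via the $1$-Lipschitz bound $\lvert\nabla_{xy}u_{+}\rvert\le\lvert\nabla_{xy}u\rvert$ is a welcome detail the paper leaves implicit.
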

\begin{proof} Let $u$ be $ \mathcal{H} $-subharmonic. Fix $x \in X$.

Case~1: $u_+(x) =0$. Then,
\begin{align*}
    (\Delta_p + V)[u_+](x) = -\frac{1}{m(x)}\sum_{y \in X} b(x,y) | u_+(y)|^{p-2} u_+(y) \leq 0
\end{align*}

Case~2: $u_+(x) >0$. That is $u_+(x)=u(x) > 0$. Then,
\begin{align*}
    (\Delta_p + V)[u_+](x) 
& = \frac{1}{m(x)}\sum_{y \in X} b(x,y) (u(x)-u_+(y))^{\langle p-1\rangle} +  V(x) {|u(x)|^{p-2}u(x)} \,.
\end{align*}
Now, two situations may occur. First, $u_+(y)=0$, that is, $u(y) \leq 0$. In this case, since $u(x)>0$ it follows that  
$$(u(x)-u_+(y))^{\langle p-1\rangle}=u(x)^{p-1}  \leq (u(x)-u(y))^{p-1}=(u(x)-u(y))^{\langle p-1\rangle}.$$ On the other hand, if $u_+(y) >0$ then $u_+(y)=u(y)>0$, hence  
$$(u(x)-u_+(y))^{\langle p-1\rangle}= (u(x)-u(y))^{\langle p-1\rangle}.$$

Thus, it follows that 
\begin{align*}
 (\Delta_p + V)[u_+](x)
& \leq 
 (\Delta_p + V)[u](x) \leq 0 \,.
\end{align*}
Consequently, the above conclusion holds in this case as well.
This shows that $u_+$ is $ \mathcal{H} $-subharmonic.
    \end{proof}
\begin{remark}\label{ccor_subsol}
In the linear case $p=2$, the conclusion of Lemma~\ref{lem:u_+} can be extended to say that the pointwise absolute value of an $\mathcal{H}$-harmonic function is $\mathcal{H}$-subharmonic. This is still  true in the quasilinear case $p\neq 2$ and we discuss it here shortly although we do not apply it in this paper:
If $u$ is an $ \mathcal{H} $-harmonic function, then $|u|$ is $ \mathcal{H} $-subharmonic.
\begin{proof} Define the function $f(s) = s |s|^{p-2}$, $s\in \mathbb{R}$, which is strictly increasing for $p > 1$. Observe that  for $x\in X$
	\begin{equation*}
	\Delta_{p}[|u|](x)=	\sum_{y\in X}b(x,y)f(\nabla_{xy}|u|)
	\end{equation*}
	\\
	Case 1: $u(x)\ge 0$. Then $|u(x)|=u_+(x)$, and the monotonicity of $f$ implies
	$$f(\nabla_{xy}|u|)= f(u_+(x)-|u(y)|)\leq f(u_+(x)-u_+(y))=f(\nabla_{xy}u_+).$$ 
	Hence, using this and Lemma~\ref{lem:u_+}, we obtain 
	\begin{equation*}
	\Delta_{p}[|u|] =\sum_{y\in X}b(x,y) f(\nabla_{xy}|u|)\leq \sum_{y\in X}b(x,y) f(\nabla_{xy}u_+)\leq -V[u_+](x)=-V[|u|](x).
	\end{equation*}
	 Case 2: $u(x)< 0$. Then  $|u(x)|=(-u)_+(x)$, and the monotonicity of $f$ implies
	 $$f(\nabla_{xy}|u|)= f((-u)_+(x)-|u(y)|)\leq f((-u)_+(x)-(-u)_+(y))=f(\nabla_{xy}(-u)_+).$$ 
	 Since $u$ is $\mathcal{H}$-harmonic, so is $-u$, the latter inequality and Lemma~\ref{lem:u_+} with $-u$ imply 
	 \begin{equation*}
	 \Delta_{p}[|u|] \!=\!\sum_{y\in X}b(x,y) f(\nabla_{xy}|u|)\!\leq \!\sum_{y\in X}b(x,y) f(\nabla_{xy}(-u)_+)\!\leq\! -V[(-u)_+](x)=-V[|u|](x).\qedhere
	 \end{equation*}
\end{proof}\end{remark}
\begin{proof}[{Proof of Theorem~\ref{t:Landis_general}}]
Let $ u $ be $ \mathcal{H} $-harmonic. {By our assumption, $ u_{+}\neq 0 $.} Then, by  Lemma~\ref{lem:u_+} above, we have
\begin{align*}
	\mathcal{H}u_{+}\le 0.
\end{align*}
We collect the following facts:
\begin{itemize}
	\item [(a)] $\tilde{\mathcal{H}}$ is critical and $v>0$ is an Agmon ground state.
	\item [(b)] $u_{+}\neq  0$  and $\mathcal{H}u_{+}\leq 0$.
	\item [(c)] {$ b^{2/p}(u_{+}\otimes u_{+})\leq C \tilde{b}^{2/p}(v\otimes v)$ for some $ C>0 $.} 
    \item [(d)] 			$b^{1-2/p}{|\nabla u_{+}|^{p-2}}\leq C {\tilde  b}^{1-2/p}{|\nabla v|^{p-2}}$ for some $ C>0 $  {on $\{u>0\}\times \{u>0\}$}.
\end{itemize}
Thus, by the Liouville comparison theorem, Theorem~\ref{thm:comparison}, we infer that $ \mathcal{H} $ is critical and $ u=u_{+}>0 $ is an Agmon ground state for $ \mathcal{H} $. In particular, $\mathcal{H} u=0$.
Now, since $ V
\leq 1 $, we have 
\begin{align*}
(\Delta_p+1)[ u] \ge (\Delta_p+V) [u]=\mathcal{H} [u]=0.
\end{align*}
As $ G_{1} $ is a solution of minimal growth at infinity for $ \Delta_p+1 $, we obtain that $G_1\leq Cu$ in $X$ for some $C>0$ which contradicts our assumption  $\liminf_{x\to\infty} {u(x)}/{G_{1}(x)}=0 $.  Thus, any such $ \mathcal{H} $-harmonic function is trivially zero.    
\end{proof}

\eat{ \begin{proof}[{Proof of Theorem~\ref{t:Landis_general}}]	Let $ u $ be a $ \mathcal{H} $-harmonic function, and assume without loss of generality that $ u_{+}\neq 0 $ (otherwise, consider $ -u$). Then, by  Lemma~\ref{lem:v_+} above, we have
	\begin{align*}
		\mathcal{H}u_{+}\le 0.
	\end{align*}
	We collect the following facts:
	\begin{itemize}
		\item [(a)] $\mathcal{H}'$ is critical and $v>0$ is an Agmon ground state.
		\item [(b)] $u_{+}\neq  0$  and $\mathcal{H}u_{+}\leq 0$.
		\item [(c)] {$ b(u_{+}\otimes u_{+})\leq C b'(v\otimes v)$ for some $ C>0 $.} 
	\end{itemize}
	Thus, by the Liouville comparison theorem, Theorem~\ref{thm:comparison}, we infer that $ \mathcal{H} $ is critical and $ u=u_{+}>0 $ is an Agmon ground state for $ \mathcal{H} $. In particular, $\mathcal{H} u=0$.
	
	Now, since $ V
	\leq 1 $ we have 
	\begin{align*}
		(\Delta+1) u \ge (\Delta+V) u=\mathcal{H} u=0.
	\end{align*}
	As $ G_{1} $ is a solution of minimal growth at infinity for $ \Delta+1 $, we obtain that $G_1\leq Cu$ in $X$ for some $C>0$ which contradicts our assumption  $\liminf_{x\to\infty} {u(x)}/{G_{1}(x)}=0 $.  Thus, any such $ \mathcal{H} $-harmonic function is trivially zero.
    \end{proof}}

Often it is hard to get a hold of asymptotics $G_{1}$ in the quasilinear case. Nevertheless, it is easier to find the asymptotics of $G_{0}$ if it exists. Having this in mind, in the following corollary, we replace $G_1$ from the liminf condition of Theorem \ref{t:Landis_general} by a positive solution $g$ of $\Delta_p=0$ in $X$ of minimal growth at infinity. 
\begin{cor} \label{cor:neg_pot}
		{Let $ \mathcal{H}=\Gd_p+V$ be a positive quasilinear Schr\"odinger operator associated to a connected graph $ b $ over $ (X,m) $ with potential $ V\le 0 $. Let $\tilde {\mathcal{H}}$ be a critical quasilinear Schr\"odinger operator with Agmon ground state $ v>0 $ associated to a connected graph $ \tilde b$ over $ (X,m)$.  
		Any $ \mathcal{H} $-subharmonic function $ u $ with non-trivial positive part $u_{+}$ 	satisfying for some  $C>0$	
		 {on $\{u>0\}\times \{u>0\}$}
		\begin{align*}
			b^{2/p}(u_{+}\otimes u_{+})&\leq C {\tilde b}^{2/p}(v\otimes v),\\
			b^{1-2/p}{|\nabla u_{+}|^{p-2}}&\leq C {\tilde  b}^{1-2/p}{|\nabla v|^{p-2}},
		\end{align*}
		and for some positive   {$p$-harmonic} function $g$ of minimal growth at infinity 
		\begin{equation*}
			\liminf_{x\to\infty} \frac{u_{+}(x)}{g(x)}=0
		\end{equation*}
		is trivially zero.}
\end{cor}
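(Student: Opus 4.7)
The plan is to mirror the proof of Theorem~\ref{t:Landis_general} essentially verbatim in its first half, and then swap out the final comparison against $G_{1}$ for a comparison against $g$ using the minimal growth property of the latter. Concretely, since $u$ is $\mathcal{H}$-subharmonic, Lemma~\ref{lem:u_+} gives that $u_{+}$ is also $\mathcal{H}$-subharmonic, i.e., $\mathcal{H} u_{+}\le 0$. Together with $u_{+}\neq 0$, the criticality of $\tilde{\mathcal{H}}$, the assumption that $v>0$ is its Agmon ground state, and the two weighted comparison bounds on $b^{2/p}(u_{+}\otimes u_{+})$ and $b^{1-2/p}|\nabla u_{+}|^{p-2}$, the Liouville comparison principle (Theorem~\ref{thm:comparison}) applies and yields that $\mathcal{H}$ is critical with $u=u_{+}>0$ an Agmon ground state. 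In particular $\mathcal{H} u=0$ throughout $X$.

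Next, I would exploit the sign assumption $V\le 0$ in place of $V\le 1$. Since $u>0$ and $\mathcal{H}u=\Delta_{p}u+V[u]=0$, we have
\begin{equation*}
\Delta_{p}u=-V[u]=-V\, u^{p-1}\ge 0
\end{equation*}
everywhere on $X$. Hence $u$ is a strictly positive $\Delta_{p}$-superharmonic function on the whole of $X$. This is the analogue of the step $(\Delta_{p}+1)u\ge 0$ in the proof of Theorem~\ref{t:Landis_general}, but now adapted to the unshifted $p$-Laplacian.

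Finally, I would apply the minimal growth property of $g$. Let $K_{0}\Subset X$ be a compact set outside of which $g$ is $p$-harmonic and satisfies the minimal growth property. Choose $C>0$ large enough so that $g\le C u$ pointwise on $K_{0}$; such $C$ exists since $u>0$ and $K_{0}$ is finite. Because $\Delta_{p}$ is positively $(p-1)$-homogeneous, the scaled function $Cu$ remains $\Delta_{p}$-superharmonic on all of $X$, hence in particular on $X\setminus K_{0}$. The minimal growth property of $g$ then forces $g\le C u$ on $X\setminus K_{0}$, so $g\le C u$ globally on $X$. Equivalently, $u_{+}/g=u/g\ge 1/C$ on $X$, which contradicts the hypothesis $\liminf_{x\to\infty} u_{+}(x)/g(x)=0$. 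Therefore no such $\mathcal{H}$-subharmonic $u$ with non-trivial positive part can exist, proving the claim.

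The proof is essentially a reassembly of ingredients already developed; I do not see a genuinely hard step. The only subtlety to handle carefully is verifying that one can promote the bound $g\le Cu$ from $K_{0}$ to all of $X$ using the definition of minimal growth, which requires $Cu$ to be $\Delta_{p}$-superharmonic on $X\setminus K_{0}$ (secured by $V\le 0$ and the $(p-1)$-homogeneity of $\Delta_{p}$) and $g\le Cu$ on the compact set $K_{0}$ (secured by choosing $C$ large, using $u>0$ and finiteness of $K_{0}$).
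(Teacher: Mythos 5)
Your proposal is correct and follows essentially the same route as the paper: reduce to the Liouville comparison principle via Lemma~\ref{lem:u_+} to conclude that $u=u_{+}>0$ is an Agmon ground state, then use $V\le 0$ to see that $u$ is $\Delta_p$-superharmonic and invoke the minimal growth property of $g$ to get $g\le Cu$ globally, contradicting the liminf hypothesis. The paper's proof is just a terser version of the same argument (it cites the proof of Theorem~\ref{t:Landis_general} for the first half), and your extra care in spelling out the homogeneity of $\Delta_p$ and the choice of $C$ on the compact set is a faithful expansion of what the paper leaves implicit.
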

\begin{proof}
    Using the same arguments as in the proof of Theorem \ref{t:Landis_general}, we can prove that $u>0$ and $u$ is an Agmon ground state.  Now, since $ V
\leq 0 $ we have 
\begin{align*}
\Delta_p u \ge (\Delta_p+V) u=\mathcal{H} u=0.
\end{align*}
As $ g $ is a positive solution of minimal growth at infinity for $ \Delta_p $, we obtain that $g \leq Cu$ in $X$ for some $C>0$ which contradicts our assumption  $\liminf_{x\to\infty} {u_{+}(x)}/{g(x)}=0 $.  Thus, any such $ \mathcal{H} $-harmonic function is trivially zero.
\end{proof}

\section{Examples}\label{sec:Examples}

\subsection{Model Graphs}
Let $b$ be a graph on $X$. The combinatorial graph distance $d: X \times X \rightarrow [0,\infty)$ is $d(x,y)$ is the minimal number of edges of a path connecting $x$ and $y$. For a fixed vertex $o \in X$, we denote for $x\in X$
\begin{equation*}
	|x|=d(x,o)
\end{equation*}
and
define the ball centered at $o$ of radius $r>0$ by
$$B_r(o)=\{x \in X\mid |x| \leq r\}$$
as well as  the corresponding sphere by 
$$S_r(o)=\{x \in X\mid |x| = r\} \,.$$
Observe that any neighbor $y$ of a vertex $x   \in S_r(o)$ is contained in either $S_{r-1}(o)$, $S_r(o)$ or $S_{r+1}(o)$. The inner and outer curvature are respectively defined for $x\in X$ as
$$k_{\pm}(x)=\frac{1}{m(x)} \sum_{y \in S_{|x|  \pm 1}(o)} b(x,y).
$$
We define $k_{-}(o)=0$. Now, $b$ is said to be a model graph on $X$ with respect to $o$ if $k_{\pm}$  are spherically symmetric, i.e., $k_{\pm}(x)=k_{\pm}(y)$ 
for all $x,y\in X$ with $|x|=|y|$. Furthermore, we let
 for every $k \geq 0$, 
$$\partial_bB_k(o) :=
\sum_{x\in S_k(o), y\in S_{k+1}(o)} b(x,y),$$
i.e., $\partial_bB_k(o)$ denotes the total edge weight between spheres of radius $k$ and spheres of
radius $k+1$ with respect to the root $o\in x$.  The following proposition follows by a direct computation and can also be found in  \cite{F_Thesis}.

\begin{proposition}[Example 9.8 in \cite{F_Thesis}]\label{prop_sub_crt}
	The energy functional of the $p$-Laplacian on a model graph $b$ is subcritical if and only if for some (all) $x\in X$
\begin{equation*}
		G_0(x)=\sum_{k=|x|}^{\infty} \left( \frac{m(o)}{\partial_b B_k(o)}\right)^{1/(p-1)}
\end{equation*}
	is finite. In this case, $G_{0}$ is the minimal positive Green function.
\end{proposition}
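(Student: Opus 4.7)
The plan is to exploit spherical symmetry to reduce the question to a first-order scalar recursion. By \cite[Theorem~2.5]{Florian_cri}, $\Delta_p$ is subcritical on $X$ if and only if a minimal positive Green function $G_0$ (a positive solution of $\Delta_p \phi = 1_o$ of minimal growth at infinity) exists, so it suffices to construct such a $G_0$ as a radial function and to pin down precisely when this construction succeeds.

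First, I would seek a spherically symmetric $u$, writing $u(r)$ for its common value on $S_r(o)$, and set $a_k := u(k-1) - u(k)$, anticipating that $u$ is decreasing so that $a_k > 0$. For $x \in S_r(o)$ with $r \geq 1$ the contributions from neighbors within $S_r(o)$ cancel, and a direct computation (using that $c^{\langle p-1\rangle}$ is odd) gives
\[
\Delta_p u(x) \,=\, k_+(r)\, a_{r+1}^{\,p-1} - k_-(r)\, a_r^{\,p-1},
\]
while at the root $\Delta_p u(o) = \partial_b B_0(o)\, a_1^{\,p-1}/m(o)$. Imposing $\Delta_p u = 1_o$ yields the recursion $k_+(r)\, a_{r+1}^{\,p-1} = k_-(r)\, a_r^{\,p-1}$ for $r\geq 1$ together with $a_1^{\,p-1} = m(o)/\partial_b B_0(o)$. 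The key input is the model-graph identity
\[
k_+(r)\, m(S_r(o)) \,=\, k_-(r+1)\, m(S_{r+1}(o)) \,=\, \partial_b B_r(o),
\]
which follows immediately from summing the definitions of $k_{\pm}$ over a sphere and using the symmetry of $b$. Substituting, the recursion becomes $\partial_b B_r(o)\, a_{r+1}^{\,p-1} = \partial_b B_{r-1}(o)\, a_r^{\,p-1}$, which telescopes to
\[
a_k^{\,p-1} \,=\, \frac{m(o)}{\partial_b B_{k-1}(o)}, \qquad k \geq 1.
\]
Hence a positive radial solution of $\Delta_p u = 1_o$ with $u \to 0$ at infinity exists iff $\sum_{k \geq 0} (m(o)/\partial_b B_k(o))^{1/(p-1)} < \infty$, in which case it is exactly the claimed expression for $G_0$.

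The remaining step, and where I expect the only real work, is to upgrade the above radial existence result into the genuine criticality/minimality dichotomy. In the convergent case I would verify that the radial candidate is of minimal growth at infinity: since $u \to 0$, any positive $\Delta_p$-superharmonic function $\phi$ on the complement of a finite set $K$ which dominates $u$ on $\partial K$ must dominate $u$ throughout $X \setminus K$ by a boundary comparison argument for positive quasilinear operators, so $u = G_0$. In the divergent case no radial positive Green function exists, and one must still exclude non-radial ones; I would do this either by a spherical-averaging argument that turns any hypothetical non-radial positive Green function into a radial positive superharmonic function (contradicting divergence), or by directly constructing a radial null-sequence from truncated primitives of $(a_k)$ and invoking the null-sequence characterization of criticality in \cite[Theorem~5.1]{Florian_nonlocal}. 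The slightly delicate point at $p \neq 2$ is the lack of linearity in this symmetrization/averaging step.
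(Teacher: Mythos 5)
Your computation is correct and is precisely the ``direct computation'' the paper alludes to (the paper itself gives no proof, deferring to Example~9.8 of Fischer's thesis): the radial ansatz, the identity $k_+(r)\,m(S_r(o))=k_-(r+1)\,m(S_{r+1}(o))=\partial_b B_r(o)$, and the telescoped decrements $a_k^{p-1}=m(o)/\partial_b B_{k-1}(o)$ all check out, and summing the tail reproduces the stated formula for $G_0$. Your outline of the two remaining steps is also the right one: minimality of growth follows from the Khasminskii-type comparison argument exactly as the paper carries it out for regular trees (using $u\to 0$ at infinity and the weak comparison principle on an exhaustion), and in the divergent case you should commit to the second of your two options --- the radial null-sequence built from truncated primitives of $(a_k)$, whose energy is controlled by the $p$-capacity of annuli and tends to $0$ when $\sum_k(\partial_b B_k(o))^{-1/(p-1)}=\infty$ --- since, as you correctly suspect, spherical averaging of a non-radial positive superharmonic function is not available for $p\neq 2$ (and a model graph need not admit any sphere-preserving automorphisms, so one cannot symmetrize the minimal Green function that way either). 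One small point worth making explicit in the convergent direction: the constructed $u$ satisfies $\Delta_p u=1_o\geq 0$ but is not harmonic at $o$, so it is a positive superharmonic non-harmonic function, which already rules out criticality without appealing to the Green-function characterization.
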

In the case where $G_{0}$ is finite we say that the model graph $b$ is subcritical. We denote $f\in O(g) $ if $|f|\lesssim g$ for functions $f$ and $g$.
\begin{thm}\label{t:model}
Let $u$ be an $ \mathcal{H} $-harmonic function  of a positive $p$-Schr\"odinger operator $ \mathcal{H} =\Delta_p+V  $ with $ V\leq 0 $ on a subcritical model graph $b$ over $(X,m)$ such that  
\begin{align*}
   u\in  O\left(G_0 ^{(p-1)/p} \right)\,, 
\end{align*}
\begin{align*}%
  |\nabla_{xy} u_+|^{p-2} \in O \left( { {\left(G_{0}(z)^{-\frac{1}{p}} (\partial_b B_{|x|}(o))^{-\frac{1}{p-1}} \right)^{p-2}}}\right)
\end{align*}
  for  
   $x,y\in \{u>0\}$,
 $ x\sim y$ { with }$|x|>|y|$, where $z=x$ for $p<2$ and
  {$z=y$ for $p\ge 2$},  as well  as $  |\nabla_{xy} u_+|=0$ for $x\sim y$ with $|x|=|y|$ for $p\ge 2$ and
\begin{align*}%
 \liminf_{x\to\infty}\frac{|u(x)|}{G_0(x)} =0.
\end{align*}
Then,  $ u=0 $.
\end{thm}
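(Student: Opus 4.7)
The plan is to apply Corollary~\ref{cor:neg_pot} with $\tilde b=b$, comparison function $g:=G_0$, and reference ground state $v:=G_0^{(p-1)/p}$. By Proposition~\ref{prop_sub_crt} together with the minimality of $G_0$, the function $G_0$ is positive, $\Delta_p$-harmonic on $X\setminus\{o\}$, and of minimal growth at infinity, so it qualifies as the $p$-harmonic function $g$ required in the corollary; and since $u_+\le |u|$, the hypothesis $\liminf_{x\to\infty}|u(x)|/G_0(x)=0$ transfers at once to $u_+/G_0$. The first comparison $b^{2/p}(u_+\otimes u_+)\le C\,\tilde b^{2/p}(v\otimes v)$ reduces (since $\tilde b=b$) to $u_+(x)u_+(y)\lesssim G_0(x)^{(p-1)/p}G_0(y)^{(p-1)/p}$, which is precisely what $u\in O(G_0^{(p-1)/p})$ encodes.

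Next, I would set $\tilde V:=-\Delta_p v/v^{p-1}$, so that $\tilde{\mathcal{H}}:=\Delta_p+\tilde V$ annihilates $v$. The crucial auxiliary claim is that $\tilde{\mathcal{H}}$ is critical with $v$ as its Agmon ground state. I would establish this by producing an explicit null-sequence $\varphi_n:=v\eta_n$ in $C_c(X)$, where $\eta_n$ is a spherically symmetric cutoff taken on the Green function scale. Using the simplified energy (Proposition~\ref{Prop:simp_energy}) and the telescoping identity $G_0(y)-G_0(x)=(m(o)/\partial_b B_{|y|}(o))^{1/(p-1)}$ valid on every radial edge with $|x|=|y|+1$, the functional $\tilde{\mathcal{Q}}(\varphi_n)$ reduces to a radial sum whose total is controlled by the tail of the series defining $G_0$, and hence vanishes as $n\to\infty$ by subcriticality. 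Alternatively, one may invoke the general discrete Hardy-weight / ground state constructions of \cite{Florian_cri, Florian_nonlocal}, in which the $(p-1)/p$-th power of a subcritical Green function is canonically identified as a ground state.

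With $\tilde{\mathcal{H}}$ in place, the remaining input is the gradient comparison, which since $\tilde b=b$ becomes $|\nabla u_+|^{p-2}\lesssim|\nabla v|^{p-2}$. For radial neighbours $x\sim y$ with $|x|=|y|+1$, the mean value theorem applied to $t\mapsto t^{(p-1)/p}$ gives
\[
|\nabla_{xy}v|=\tfrac{p-1}{p}\,\xi^{-1/p}\bigl(m(o)/\partial_b B_{|y|}(o)\bigr)^{1/(p-1)},\qquad \xi\in\bigl(G_0(x),G_0(y)\bigr).
\]
For $p<2$ the map $t\mapsto t^{p-2}$ is decreasing, so $|\nabla v|^{p-2}$ is minimized at $\xi=G_0(x)$, matching the stated $O$-bound with $z=x$; for $p\ge 2$ the monotonicity reverses and $\xi=G_0(y)$ yields $z=y$. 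The discrepancy between $\partial_b B_{|y|}$ and $\partial_b B_{|x|}$ is absorbed into the implicit constant in $O(\cdot)$ because these indices are adjacent. On same-sphere edges $|x|=|y|$, spherical symmetry of $G_0$ forces $\nabla_{xy}v=0$, so for $p\ge 2$ the imposed $\nabla_{xy}u_+=0$ is exactly what the inequality demands, while for $p<2$ the convention $0\cdot\infty=0$ renders the inequality automatically satisfied. With every hypothesis of Corollary~\ref{cor:neg_pot} verified, $u\equiv 0$ follows.

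I expect the main obstacle to be the criticality of $\tilde{\mathcal{H}}$: while $v=G_0^{(p-1)/p}$ is the natural candidate Agmon ground state and the simplified energy is well suited for the null-sequence bound, organizing the radial cancellations for general $p\ne 2$ is the delicate step. All remaining checks reduce to direct calculations with the explicit series form of $G_0$ on a model graph and standard estimates for $t\mapsto t^{(p-1)/p}$.
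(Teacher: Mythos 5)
Your proposal is correct and follows essentially the same route as the paper: apply Corollary~\ref{cor:neg_pot} with $v=G_0^{(p-1)/p}$ as Agmon ground state of a critical operator, verify the two comparisons via the mean value theorem for $t\mapsto t^{(p-1)/p}$ (choosing $z=x$ or $z=y$ according to the monotonicity of $t\mapsto t^{p-2}$), and use $g=G_0$ in the liminf condition. The step you flag as the main obstacle --- criticality of $\Delta_p+\tilde V$ with ground state $G_0^{(p-1)/p}$ --- is exactly what the paper dispatches by citing the optimal Hardy-weight construction \cite[Example 12.18]{F_Thesis}, i.e.\ your ``alternative'' route is the one actually taken, so no new null-sequence argument is needed.
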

\begin{proof}
{Let $\Phi=G_0^{(p-1)/p}$.} In  \cite[Example 12.18]{F_Thesis}   an {\it optimal Hardy-weight} $W_{\mathrm{op}}$ is constructed for $\Delta_p$ via 
{$W_{\mathrm{op}}= \Delta_{p}\Phi/\Phi^{(p-1)}$.} 
This ensures that  $\Delta_p-W_{\mathrm{op}}$ is critical with the Agmon ground state 
{$\Phi_{\Delta_p-W_{\mathrm{op}}}=\Phi$. Notice that $u_+ \in O(\Phi)$.}

Let $x\sim y$. If $|x|=|y|$, then $\nabla_{xy} \Phi=0$. Assume now $|x|>|y|$ which then implies $G_{0}(y)>G_{0}(x)$. Hence, by the mean value theorem applied to the function $t\mapsto t^{1-1/p}$, we obtain
\begin{equation*}
	|\nabla_{xy}\Phi|=\nabla_{{yx}}G_{0}^{1-\frac{1}{p}}\le\frac{(p-1)}{p}G_{0}(x)^{-\frac{1}{p}}\nabla_{{yx}}G_{0}=\frac{(p-1)}{p}m(o)^{\frac{1}{(p-1)}} G_{0}(x)^{-\frac{1}{p}}(\partial_b B_{|x|}(o))^{-\frac{1}{p-1}}.
\end{equation*}
Similarly,
\begin{equation*}
	|\nabla_{xy}\Phi|=\nabla_{{yx}}G_{0}^{1-\frac{1}{p}}\ge\frac{(p-1)}{p}G_{0}(y)^{-\frac{1}{p}}\nabla_{{yx}}G_{0}=\frac{(p-1)}{p}m(o)^{\frac{1}{(p-1)}} G_{0}(y)^{-\frac{1}{p}}(\partial_b B_{|x|}(o))^{-\frac{1}{p-1}}.
\end{equation*}
Thus, $|\nabla_{xy} u_+|^{p-2} \in O(|\nabla_{xy} \Phi|^{p-2})$ for $x \sim y$ where we first use the {lower} bound for $p\ge 2$ and secondly the {upper} bound for $p<2$.
Also, we note that 
$\liminf_{x \rightarrow \infty} u_+(x)/G_0 (x)=0$. Thus, $u=0$ by Corollary~\ref{cor:neg_pot}.
\end{proof}

\begin{example}
	An anti-tree is a model graph with standard weights such that $b\vert_{S_{r}(o)\times S_{r+1}(o)}=1$ for all $r\ge 0$ and $b=0$ otherwise. We also set $m=1$.
	Let $p>1$, $\gamma>0$ and an anti-tree be given such that 
	\begin{equation*}
		\# S_{r+1}(o)=[r^{\gamma}],
	\end{equation*}  
	where  $[s]$ denotes the smallest integer larger or equal to $s\in \mathbb{R}$. 
	In this case, we have  for all $r\ge0$
	 $$\partial_{b}B_{r}(o)=
	\#S_{r}(o)\cdot \#S_{r+1}(o) 	\asymp 
	r^{2\gamma}.$$ Hence, we obtain  for  $|z|\ge 1$
	$$G_{0}(z)=\sum_{k=|z|}^{\infty} \left( \frac{1}{\partial_b B_k(o)}\right)^{1/(p-1)}\asymp \sum_{k=|z|}^{\infty} k^{-\frac{2\gamma}{p-1}}  \asymp |z|^{-\frac{2\gamma}{(p-1)}+1}=|z|^{-\frac{2\gamma -p+1}{(p-1)}}$$
and 	 the anti-tree is subcritical if $\gamma>(p-1)/2$. In this case,  we have for $z\in\{x,y\}$
    \begin{equation*}
		G_{0}(z)^{-\frac{1}{p}}    {(\partial_b B_{|x|}(o))^{-\frac{1}{(p-1)}}}\asymp |z|^{\frac{2\gamma -p+1}{p(p-1)}}{|x|^{-\frac{2\gamma}{(p-1)}}} \asymp |x|^{-\frac{(2\gamma -1)}{p}}.
	\end{equation*}
Hence, using the above theorem, we obtain that for any $ \mathcal{H} $-harmonic function $u$ of a positive $p$-Schr\"odinger operator $ \mathcal{H} =\Delta_p+V  $ with $ V\leq 0 $ on a subcritical anti-tree with parameter $\gamma>(p-1)/2$ such that 
\begin{align*}
	u\in  O\left(|x| ^{-\frac{2\gamma-p+1}{p}} \right)\,, 
\end{align*}\begin{align*}%
	|\nabla_{xy} u_+|^{p-2} \in O \left(  |x|^{-\frac{(2\gamma -1)}{p} }\right)
\end{align*}
for  
   $x,y\in \{u>0\}$,
 $ x\sim y$ { with }$|x|>|y|$ and

\begin{align*}%
	\liminf_{x\to\infty}{|u(x)|}||x|^{\frac{2\gamma -p+1}{(p-1)}} =0
\end{align*}
is trivial.
\end{example}

\subsection{Regular Trees} In this section we look at a special class of model graphs that allow for even more explicit criteria. In particular, for these trees we can treat the case $V\leq 1$ and not only $V\leq 0$ as above.
We consider a tree  with countably infinite vertex set $ X=\mathbb{T}_d $. We fix an arbitrary vertex $ o $ and denote by $ |x|= d(x,o) $ the {\it combinatorial graph distance} to $ o $ from a vertex $ x $. A tree is said to be {\it {$d$}-regular} if $k_+(x)=d$ and $k_-(x)=1$ for every $x\in \mathbb{T}_d$.
Again, we consider the Laplacian $ \Delta $ with standard weights $ b(x,y)=1 $ if $d(x,y)= 1$ and $0$ otherwise, as well as  $ m=1 $.

Recall that a function on $X$ is called  \emph{spherically symmetric} if  $u(x)=u(y)$ whenever $|x|=|y|$. In this case we write with slight abuse of notation $u(x)=u(|x|)$ for $x\in X$.

\begin{lem}
	Let $p>1$ and let a $d$-regular tree be given. Then there is $\beta=\beta_{p,d} \in (0,1)$ which satisfies
	\begin{align*}
		(1-\beta)^{p-1} = \left[ \frac{1}{d} \left(\frac{1}{\beta} -1\right)^{p-1} - \frac{1}{d} \right] \,.
	\end{align*}
	such that $ u(|x|)=\beta^{|x|}$ satisfies  $(\Delta_p +1)[u]=0$ in 
    $X \setminus \{o\}$ and has minimal growth at infinity.
\end{lem}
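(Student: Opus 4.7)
My plan has three stages. The first two are routine; the third is the main obstacle.

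\textbf{Stage 1 (verification).} Fix $x \in X$ with $|x| = n \geq 1$. Since the tree is $d$-regular with standard weights and $m\equiv 1$, $x$ has exactly one neighbor at level $n-1$ and $d$ neighbors at level $n+1$. For $u(z) = \beta^{|z|}$ with $\beta \in (0,1)$, the inward and outward differences are $\nabla_{xy_-}u = -\beta^{n-1}(1-\beta) < 0$ and $\nabla_{xy_+}u = \beta^n(1-\beta) > 0$ respectively, so
\begin{equation*}
\Delta_p[u](x) = -\bigl(\beta^{n-1}(1-\beta)\bigr)^{p-1} + d\bigl(\beta^n(1-\beta)\bigr)^{p-1} = (1-\beta)^{p-1}\beta^{(n-1)(p-1)}(d\beta^{p-1}-1).
\end{equation*}
Requiring $\Delta_p[u](x) + u(x)^{p-1} = 0$, dividing by $\beta^{(n-1)(p-1)}$, and then by $\beta^{p-1}$ and isolating $(1-\beta)^{p-1}$, recovers the stated fixed-point equation.

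\textbf{Stage 2 (existence).} Set $F(\beta) := (1-\beta)^{p-1} - \tfrac{1}{d}(1/\beta-1)^{p-1} + \tfrac{1}{d}$ on $(0,1)$. Then $F(\beta) \to 1/d > 0$ as $\beta \to 1^-$, while $F(\beta) \to -\infty$ as $\beta \to 0^+$ because of the blow-up of $(1/\beta - 1)^{p-1}$. Continuity and the intermediate value theorem produce a root $\beta \in (0,1)$, which is the desired $\beta_{p,d}$.

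\textbf{Stage 3 (minimal growth), the main obstacle.} Since $V \equiv 1 > 0$, the operator $\Delta_p + 1$ is strictly positive and subcritical on $X$, so by \cite[Theorem~2.5]{Florian_cri} the minimal positive Green function $G_1$ exists and, by definition, is a positive solution of $(\Delta_p+1)G_1 = 0$ of minimal growth at infinity on $X \setminus \{o\}$. By uniqueness (up to positive scaling) of $G_1$ and the spherical symmetry of the $d$-regular tree, $G_1$ is itself spherically symmetric. The remaining task is therefore to identify $\beta^{|x|}$ with a positive multiple of $G_1$. I would do this by exhaustion: let $u_R$ be the unique solution of $(\Delta_p+1)u_R = 0$ on $B_R(o) \setminus \{o\}$ with $u_R(o) = 1$ and $u_R = 0$ on $\partial B_R$. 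The comparison principle for $\Delta_p + V$ with $V > 0$ on the finite set $B_R \setminus \{o\}$ yields $u_R \leq u$ (from $u(o) = 1$ and $u > 0 = u_R$ on $\partial B_R$) as well as $u_R \leq v$ for any competitor $v$ arising in the definition of minimal growth. Monotonicity in $R$ produces a pointwise limit $u_\infty \leq u$ which is spherically symmetric, positive, and $(\Delta_p+1)$-harmonic on $X \setminus \{o\}$ with $u_\infty(o) = 1$. The hardest ingredient is the identification $u_\infty = u$, which reduces to uniqueness of positive decaying spherically symmetric solutions of the one-dimensional nonlinear recurrence in the radial variable; I would establish this by a shooting argument showing that only the initial data $u(0) = 1$, $u(1) = \beta$ produce a solution that remains positive and decays to zero, while smaller $u(1)$ forces an eventual sign change and larger $u(1)$ forces growth. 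Once $u_\infty = u$ is known, passing $R \to \infty$ in $u_R \leq v$ delivers $u \leq v$ outside $K$, which is exactly the minimal growth property.
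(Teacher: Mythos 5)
Your Stages 1 and 2 are correct and coincide with the paper's computation: the same radial identity $(\Delta_p+1)[u]=u^{p-1}\bigl[1+d(1-\beta)^{\langle p-1\rangle}+(1-\tfrac1\beta)^{\langle p-1\rangle}\bigr]$ and the same intermediate-value argument with $f(1^-)=1/d>0$ and $f(0^+)=-\infty$.

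Stage 3, however, contains a genuine gap, and it sits exactly where you say the difficulty is. Your plan reduces minimal growth to the identification $u_\infty=u$, i.e.\ to the uniqueness of the positive, decaying, spherically symmetric solution of the radial recurrence, which you propose to settle by an unexecuted shooting argument. For $p\neq 2$ the solution set of the recurrence is not a linear space, so the usual "one growing mode, one decaying mode" dichotomy is not available, and the monotonicity analysis needed to show that smaller $u(1)$ forces a sign change while larger $u(1)$ forces growth is precisely the content you have not supplied. In addition, two of your auxiliary claims are unjustified in the quasilinear setting: (i) $G_1$ is unique as the \emph{minimal} positive solution of $(\Delta_p+1)\phi=1_o$, but "uniqueness up to positive scaling" of positive solutions of minimal growth is a separate uniqueness statement that does not come for free when $p\neq2$ (note $(\Delta_p+1)(cG_1)=c^{p-1}1_o$, so rescalings solve a different inhomogeneous equation); and (ii) the spherical symmetry of $G_1$, which you deduce from that uniqueness, therefore also needs an argument (e.g.\ via symmetry of the exhausting Dirichlet problems).

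The paper avoids all of this with a short Khasminskii-type argument that you could adopt verbatim: given a competitor $v>0$ with $(\Delta_p+1)[v]\ge 0$ outside $K$ and $u\le v$ on $K$, one uses that the potential is the constant $1>0$ to check $(\Delta_p+1)[v+\varepsilon]=\Delta_p[v]+1[v+\varepsilon]\ge -1[v]+1[v+\varepsilon]\ge 0$, so $v+\varepsilon$ is again a supersolution; since $u(x)=\beta^{|x|}\to 0$ at infinity, $u\le v+\varepsilon$ on $K$ and outside a large finite set $K_n$, and the weak comparison principle on the finite set $K_n\setminus K$ gives $u\le v+\varepsilon$ there; letting $n\to\infty$ and then $\varepsilon\to 0$ yields $u\le v$. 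This bypasses the Green function and the uniqueness question entirely, using only the decay of $u$ and the positivity of the potential.
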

\begin{proof}
	Let $\beta\in (0,1)$ be arbitrary and $u(x)=\beta^{|x|}$ for $x\in X$. Then, for $X\setminus \{o\}$ we have
	\begin{align*}
		(\Delta_p +1)[u](|x|) & =d\left(u(|x|)-u(|x|+1) \right)^{\langle p-1\rangle} + \left(u(|x|)-u(|x|-1) \right)^{\langle p-1\rangle} + u(|x|)^{p-1} \\
	& = du(|x|)^{p-1} \left(1-\beta \right)^{\langle p-1\rangle}  + u(|x|)^{p-1} \left(1-\frac{1}{\beta} \right)^{\langle p-1\rangle} + u(|x|)^{p-1} \\
		& = u(|x|)^{p-1} \left[ 1+ d\left(1-\beta \right)^{\langle p-1\rangle}  +  \left(1-\frac{1}{\beta} \right)^{\langle p-1\rangle}  \right] \,.
	\end{align*}
	Thus, $ (\Delta_p +1)[u](|x|)=0$ if and only if
$$f(\beta):=(1-\beta)^{p-1} - \left[ \frac{1}{d} \left(\frac{1}{\beta} -1\right)^{p-1} - \frac{1}{d} \right] =0\,.$$
One can verify that such $\beta=\beta_{p,d}$ exits because $f(1^-)>0$ and $f(0^+)<0$.\\
Now we will show that  $u$  has minimal growth at infinity. For this we will use what is typically called a Khasminskii-type criterion.  Let $v$ be a positive superharmonic of $\Delta_{p}+1$ in $K$, where $K$ is a compact set containing $o$ such that $u\leq v$ in $K$. Fix an exhaustion $(K_n)$ of finite subsets of $X$. Since  $u(|x|) \rightarrow 0$ as $|x| \rightarrow \infty$, it follows that for any $\vge>0$ there exists $n(\vge)\in \N$  such that
\begin{align*}
    u & \leq  v + \varepsilon \ \ \ \ \mbox{on} \ \  K,   \\
     u & \leq  v + \varepsilon \ \ \ \ \mbox{on} \ \ X \setminus K_n \quad  {\mbox{ for } n \geq n(\vge).}
\end{align*}
 It can be easily verified that for the potential chosen as the  constant function $1$ acting on functions $f$ as $1[f]=f|f|^{p-2}$
 \begin{align*}
   &\, (\Delta_p+1) [v + \varepsilon] = \Delta_p [v] + 1[v + \varepsilon] \geq -1[v]  +1[v + \varepsilon]
    \geq 0 
\end{align*}
on $X$.
This implies $v + \varepsilon$ is a positive supersolution of $(\Delta_p+1)[v]=0$ in $K_n \setminus K$. Then by the weak comparison principle \cite[Lemma 7.1]{F_Thesis} we obtain $u \leq v + \varepsilon$ in $K_n \setminus K$ for all $n \geq n(\vge)$. Hence, $u \leq v+\vge$ in $X \setminus K$, and since $\vge$ is arbitrarily small, it follows that $u \leq v$ in $X \setminus K$. Hence $u$ is a solution of minimal growth in 
$X\setminus\{o\}$.
\end{proof}
\begin{thm}\label{t:trees} 
Let  $d\geq 2$ and let $u$ be an $ \mathcal{H} $-harmonic function  of a positive $p$-Schr\"odinger operator $ \mathcal{H} =\Delta_p+V  $ with $ V\leq 1 $ on a $d$-regular tree $\mathbb{T}_d$. If  
\begin{align*}
\hspace{-1cm} |u| & \in 
O \left( {d^{\,- \frac{|x|}{p}}}\right) \\
    |\nabla_{xy} u_+|  & 
    \begin{cases}
      \ \lesssim \ {d^{\,- \frac{|x|}{p}}} \ \ \mbox{if} \ \ p\geq 2, \\[2mm] 
     \ \gtrsim \ {d^{\,- \frac{|x|}{p}}} \ \ \mbox{if} \ \ p< 2, 
\end{cases} \ \ \ \mbox{for} \ \ x\sim y\,,
\end{align*}
and in addition, for $\beta=\beta_{p,d}$ taken from the lemma above 
$$  \quad \liminf_{x\to\infty}\frac{|u(x)|}{\beta^{|x|}}  =0,$$
then  $ u=0 $.
\end{thm}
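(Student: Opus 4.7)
The plan is to verify the hypotheses of Theorem~\ref{t:Landis_general} with the critical comparison operator $\tilde{\mathcal{H}} = \Delta_p - W_{\mathrm{op}}$, where $W_{\mathrm{op}}$ is the optimal Hardy weight on the $d$-regular tree constructed from $\Phi = G_0^{(p-1)/p}$ exactly as in the proof of Theorem~\ref{t:model}. By that construction, $\tilde{\mathcal{H}}$ is critical with Agmon ground state $v = \Phi$, and the underlying graph $\tilde b$ can be taken equal to $b$.

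First, we compute the relevant Green function asymptotics. Since a $d$-regular tree with standard weights satisfies $\partial_b B_k(o) = d^{k+1}$, Proposition~\ref{prop_sub_crt} yields
\begin{equation*}
G_0(x) \asymp d^{-|x|/(p-1)}, \qquad \text{hence} \qquad v(x) = G_0(x)^{(p-1)/p} \asymp d^{-|x|/p}.
\end{equation*}
Because the tree only has edges between consecutive spheres, for any $x \sim y$ we may assume $|x| = |y| + 1$. The mean value theorem applied to $t \mapsto t^{(p-1)/p}$, exactly as in the proof of Theorem~\ref{t:model}, then gives the two-sided bounds
\begin{equation*}
|\nabla_{xy} v| \asymp G_0(z)^{-1/p}\bigl(\partial_b B_{|x|}(o)\bigr)^{-1/(p-1)} \asymp d^{-|x|/p},
\end{equation*}
where $z = x$ and $z = y$ deliver the upper and lower bound, respectively.

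Next, we verify condition~(c) of Theorem~\ref{t:Landis_general}. Since $b = \tilde b$, the first inequality reduces to $u_+(x)\,u_+(y) \lesssim v(x)\,v(y) \asymp d^{-(|x|+|y|)/p}$, which follows at once from $u_+ \leq |u| \lesssim d^{-|x|/p}$. The second inequality becomes $|\nabla_{xy} u_+|^{p-2} \lesssim (d^{-|x|/p})^{p-2}$; since $t \mapsto t^{p-2}$ is increasing for $p \geq 2$ and decreasing for $p < 2$, this is precisely the two stated gradient hypotheses on $u_+$. Finally, the preceding lemma identifies $x \mapsto \beta^{|x|}$ as a positive solution of $(\Delta_p + 1)[\,\cdot\,] = 0$ of minimal growth at infinity on $X\setminus\{o\}$, so uniqueness of such solutions up to a positive multiple forces $G_1 \asymp \beta^{|x|}$ off the origin. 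Hence
\begin{equation*}
\liminf_{x\to\infty} \frac{u_+(x)}{G_1(x)} \leq \liminf_{x\to\infty} \frac{|u(x)|}{C\beta^{|x|}} = 0,
\end{equation*}
and Theorem~\ref{t:Landis_general} applied to $u$ (and, if necessary, to $-u$, which is also $\mathcal{H}$-harmonic by the oddness of $\Delta_p + V$) yields $u \equiv 0$.

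The main technical point is extracting the two-sided gradient asymptotic $|\nabla_{xy} v| \asymp d^{-|x|/p}$ with both the upper and lower bound, so that the sign-sensitive quantity $|\nabla v|^{p-2}$ appearing in condition~(c) of Theorem~\ref{t:Landis_general} can be matched against the hypothesis in both regimes $p \geq 2$ and $p < 2$; once this bookkeeping is in place, the statement is a clean specialization of the general Landis theorem.
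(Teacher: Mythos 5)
Your proposal is correct and follows essentially the same route as the paper: you invoke the critical operator $\Delta_p - W_{\mathrm{op}}$ with Agmon ground state $\Phi = G_0^{(p-1)/p} \asymp d^{-|x|/p}$, check the two comparison conditions (using that $t\mapsto t^{p-2}$ flips monotonicity at $p=2$), and reduce the liminf hypothesis to the one on $G_1$ via comparability of $\beta^{|\cdot|}$ with the Green function, exactly as the paper does (the paper computes $|\nabla_{xy}\Phi|\asymp d^{-|x|/p}$ directly rather than via the mean value theorem, which is immaterial). Your explicit justification that $G_1 \asymp \beta^{|x|}$ off the origin is in fact slightly more careful than the paper's terse "we note that $\liminf u_+/G_1 = 0$"; the only quibble is that this follows from mutual comparability of positive solutions of minimal growth at infinity rather than from literal uniqueness up to a multiplicative constant.
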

\begin{proof}
In  \cite[Example 12.17]{F_Thesis}  an { optimal Hardy-weight} $W_{\mathrm{op}}$ is constructed for $\Delta_p$ on a $d$-regular tree $\mathbb{T}_d$. This ensures $\Delta_p-W_{\mathrm{op}}$ is critical with Agmon ground state 
$\Phi=\Phi_{\Delta_p-W_{\mathrm{op}}}={G_0^{{(p-1)}/{p}}}$, where 
$$G_0(x)=C_{p,d} d^{\,-\frac{|x|}{(p-1)}}$$
for some $C_{p,d}>0$ and all $x\in \mathbb{T}_{d}$. One observes that $u_+ \in O(\Phi)$. 
To verify that $|\nabla_{xy} u_+| \lesssim |\nabla_{xy} \Phi|$ (resp. $|\nabla_{xy} u_+| \gtrsim |\nabla_{xy} \Phi|$) if $p \geq 2$ (resp. $p<2$) for $x\sim y$, it is enough to observe that $y \sim x$ implies $|y|=|x|\pm 1$ and 
$$|\nabla_{xy} \Phi| = C_{p,d}^{1/p'} |{d^{\,-\frac{|x|}{p}}}-{d^{\,-\frac{|x|\pm 1}{p}}}| \asymp {d^{\,-\frac{|x|}{p}}} \,.$$ Also, we note that 
$\liminf_{x \rightarrow \infty} u_+(x)/G_1(x) =0$. Consequently, it follows from Theorem \ref{t:Landis_general} that $u=0$.
\end{proof}

\subsection{$p$-recurrent graphs with  $p\leq 2$} 

In this section we shortly discuss how we can employ our results in the case of recurrent graphs. We restrict to the case $p\le 2$ and $V\leq 0$ outside a compact set.
Recall that a graph is called recurrent if the constant function $1$ is the only positive superharmonic function of the $p$-Laplacian $\Delta_p$ on the graph. 

\begin{theorem}
Let $p\le2$ and let a recurrent graph be given. Let $u$ be an $ \mathcal{H} $-harmonic function  of a positive $p$-Schr\"odinger operator $ \mathcal{H} =\Delta_p+V  $ with $ V\leq 0 $ outside a compact set. If $u$ is bounded  and 
$$\liminf_{|x|\rightarrow \infty} u(x) =0 \,,$$
then $ u=0 $.
\end{theorem}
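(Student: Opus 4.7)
The plan is to reduce the statement to the quasilinear Liouville comparison principle (Theorem~\ref{thm:comparison}), taking the comparison operator to be $\tilde{\mathcal{H}} = \Delta_p$ itself. Recurrence of the graph means that, up to scaling, the constant function $v \equiv 1$ is the only positive $\Delta_p$-superharmonic function, so $\Delta_p$ is critical and $v = 1$ is its Agmon ground state. Since $p \leq 2$, the remark following Theorem~\ref{thm:comparison} applies: with $\tilde{b} = b$ and $v = 1$, both inequalities in item (c) collapse to the single requirement that $u$ be bounded, which is part of the hypothesis.

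Suppose for contradiction that $u \not\equiv 0$. Because $\mathcal{H}[-u] = -\mathcal{H}[u]$, the function $-u$ is also $\mathcal{H}$-harmonic, and by replacing $u$ with $-u$ if necessary I may assume $u_+ \not\equiv 0$. Lemma~\ref{lem:u_+} then yields $\mathcal{H}[u_+] \leq 0$, and the cited remark produces the conclusion that $\mathcal{H}$ is itself critical and $u = u_+ > 0$ is its Agmon ground state on $X$.

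Now I exploit the sign assumption on $V$. Let $K \Subset X$ be a finite set outside of which $V \leq 0$. Then for every $x \in X \setminus K$,
\begin{align*}
\Delta_p u(x) \;=\; -V(x)\, u(x)^{\langle p-1 \rangle} \;\geq\; 0,
\end{align*}
so $u$ is a positive $\Delta_p$-superharmonic function on $X \setminus K$. Since $v \equiv 1$ is the Agmon ground state of $\Delta_p$, it is a positive solution of minimal growth at infinity. Setting $c := \min_K u > 0$, the minimal-growth property applied to $u/c$ gives $u \geq c$ on $X \setminus K$. If no sign flip was performed, this directly contradicts $\liminf_{|x| \to \infty} u(x) = 0$; if we had flipped, the same bound applied to $-u$ reads $u \leq -c < 0$ outside $K$, forcing $\liminf_{|x|\to\infty} u(x) \leq -c$, again contradicting the hypothesis. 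Hence $u \equiv 0$.

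\textbf{Main obstacle.} Conceptually the proof is short once the remark after Theorem~\ref{thm:comparison} is in place; the only slightly delicate point is the sign analysis in the final step. Unlike the earlier theorems in the paper where the decay condition is placed on $|u|$ or on $u_+/G$, here the hypothesis constrains $u$ itself, so both sign cases must be handled separately---but the oddness of $\mathcal{H}$ makes the reduction symmetric and the argument clean.
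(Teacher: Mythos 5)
Your proposal is correct and follows essentially the same route as the paper: apply the Liouville comparison principle with $\tilde{\mathcal{H}}=\Delta_p$ and Agmon ground state $v\equiv 1$ (using that $p\le 2$ trivializes the gradient condition), conclude that $u=u_+>0$ is an Agmon ground state, and then derive a contradiction with $\liminf_{|x|\to\infty}u(x)=0$ via the minimal growth at infinity of the constant function. Your explicit treatment of the sign-flip case is slightly more careful than the paper's terse ``without loss of generality,'' but it is the same argument.
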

\begin{proof}
Assume without loss of generality that  $u_+\neq 0$. Let $v=1$ be the constant function. Then, $v$ is an Agmon ground state of $\Delta_p$ and $\Delta_p v=0$. The assumption $p\le 2$ gives that the gradient estimate in Theorem~\ref{thm:comparison}~(c) is trivially satisfied. Thus, by the Liouville comparison principle, Theorem \ref{thm:comparison}, we infer that $u>0$ and $u$ is an Agmon ground state of $\mathcal{H}$. Now, outside a compact set we have
\begin{align*}
\Delta_p [u] =(\Delta_p +0)[u] \ge (\Delta_p+V) [u]=\mathcal{H} [u]=0.
\end{align*}
Since, $v$ is a solution of minimal growth at infinity for $\Delta_p$, we obtain that $v\leq Cu$ in $X$ for some $C>0$ which contradicts our assumption  $\liminf_{|x|\to\infty} u(x)=0 $.  Thus, any such $ \mathcal{H} $-harmonic function is trivially zero.
\end{proof}



\begin{center}
	{\bf Acknowledgments}
\end{center}
U.D. is partially supported by the Basque Government through the BERC 2022-2025 program and by the Spanish Ministry of Science and Innovation: BCAM Severo Ochoa accreditation CEX2021-001142-S/MICIN/AEI/\allowbreak10.13039/501100011033 and CNS2023-143893.
M.K. is grateful for the generous hospitality at the Technion and the financial support by the Swiss Fellowship during the time this research was initiated. Additionally, {the authors acknowledge} the support of the DFG supporting a stay of U.D. at the University of Potsdam.

\medskip

\end{document}